\numberwithin{equation}{section}
\newtheorem{theorem}{Theorem}[section]
\newtheorem{lemma}{Lemma}[section]
\newtheorem{definition}{Definition}[section]
\newtheorem{remark}{Remark}[section]
\newcommand{\f}{\frac}
\newcommand{\e}{\epsilon}
\newcommand{\be}{\begin{equation}}
\newcommand{\ee}{\end{equation}}
\newcommand{\bs}{\begin{split}}
\newcommand{\es}{\end{split}}
\newcommand{\bc}{\begin{cases}}
\newcommand{\ec}{\end{cases}}
\newcommand{\pa}{\partial}
\begin{document}

\title[Singularly perturbed heat equation in a circle]{Numerical approximation of the singularly perturbed heat equation in a circle}

\author[Youngjoon Hong]{Youngjoon Hong}
\address[Youngjoon Hong]{Institute for Scientific Computing and Applied Mathematics, Indiana University, Bloomington, Indiana, USA.}
\email{hongy@indiana.edu}

\keywords{Singular perturbations, Heat equations, Finite element method}
%\subjclass[2000]{}

\date{\today}

\begin{abstract}
In this article we study the two dimensional singularly perturbed heat equation in a circular domain.
The aim is to develop a numerical method with a uniform mesh, avoiding mesh refinement at the boundary thanks to the use of a relatively simple representation of the boundary layer.
We provide the asymptotic expansion of the solution at first order and derive the boundary layer element resulting from the boundary layer analysis.
We then perform the convergence analysis introducing the boundary layer element in the finite element space thus obtaining what is called an ``enriched Galerkin space''.
Finally we present and comment on numerical simulations using a quasi-uniform grid and the modified finite element method.
\end{abstract}
%%%%%%%%%%%%%%%%%%%%%%%%%%%%%%%%%%%%%%%%%%%%%%%%
\maketitle
%\tableofcontents

\noindent{\bf}\\

\section{\bf Introduction}
In this article we consider the two-dimensional singularly perturbed heat equation of the form
\be
	\begin{cases}
	\dfrac{\partial u^\e}{\partial t} - \e \Delta u^\e = f, \quad \text{in  }D \times (0,T),\\
	u^\e(x,y,t) = 0, \quad \text{on  } \pa D \times (0,T),\\
	u^\e(x,y,0) = u_0(x,y), \quad \text{on  } D, 
	\end{cases} \label{eq_main}
\ee
where $0<\e \ll 1$ is the heat conductivity and $D$ is the unit disc centered at $(0,0)$. The functions $f=f(x,y,t)$ and $u_0=u_0(x,y)$ are assumed to be sufficiently regular. We also assume the compatibility condition
\be 
	u_0 = 0 \quad \text{on  } \pa D.
\label{eq:comp}	
\ee

The numerical methods for singularly perturbed problems have been studied in many articles.
In \cite{RST08}, \cite{RU03}, \cite{SM05} and \cite{ST03} the authors proposed numerical methods for stationary convection-diffusion equations using finite element methods.
More recently, one can find numerical results for parabolic type problems in \cite{BCGJ07}, \cite{CG12}, \cite{KS11}, \cite{LM10}, and \cite{VS13}.
In those articles the authors utilized mesh refinement near the boundary.
Furthermore, for the parabolic type cases, the authors mainly focused on the finite difference methods in a unit interval or rectangular domains and this did not address the issue of the curved boundary in the context of time dependent problems.
Our object here is to address these issues. We do so and avoid the costly mesh refinements at the boundary using some results from the boundary layer analysis.

In an earlier work \cite{GHT10}, which gives the theoretical background of this article, one can find the asymptotic expansion for the solution of \eqref{eq_main} in a smooth domain and the $H^1$-estimates for the ``error'' (see below).
In this article devoted to the numerical analysis of \eqref{eq_main} we first look for $H^2$-estimates of the ``error'', which play an important role in the numerical analysis, thus completing the results in \cite{GHT10}; see e.g. Remark \ref{rem1} and Theorem \ref{main_thm1}.
We then introduce the boundary layer element based on the boundary layer analysis.
Incorporating the boundary layer element in the finite element space, we obtain the proposed ``enriched'' Galerkin space to be used in the numerical simulations (together with a ``uniform'' mesh).
Then we perform the convergence analysis applying the Aubin-Nitsche trick (duality argument) as in \cite{AJ67}, \cite{CTW00}, \cite{CJ09} and \cite{NJ70}.
We then present the results of our numerical simulations using a {\it quasi-uniform grid} and the enriched finite element space.\\
For the time-dependent problems, the mesh refinement is rather costly since we have to consider large scale matrices at each time step.
Moreover, if the domain is not rectangular as is the case in our problem, the finite difference methods are not practical.
This justifies the approaches used in this article which, we believe, should also be applicable to many other types of time-dependent singularly perturbed problems such as reaction-diffusion equations.
In addition we intend, in the future, to extend our numerical methods to the linearized Navier-Stokes equations when the viscosity is small; see e.g. \cite{GHT11}, \cite{TW95}, \cite{TW97}, and a forthcoming article \cite{HT14}.

The concept of enriched space and boundary layer element was first introduced in \cite{KS87}. Unaware of \cite{KS87}, the authors of \cite{CT02} and \cite{CTW00} introduced independently a similar concept for the one-dimensional equations.
Especially, in \cite{CTW00}, the authors studied the numerical analysis of the one-dimensional time-dependent problem. In \cite{JT08}, \cite{JT06}, and \cite{JT10} the authors presented the numerical methods for the two-dimensional stationary convection-diffusion equations using the finite element methods and finite volume methods in a rectangular domain.
Lately, in \cite{HJL13}, \cite{HJT13} and \cite{JT14}, the time-independent equations were considered in a circular domain.

This article is organized as follows. In Section \ref{sec2}, we look for $L^\infty(0,T;H^2)$ estimates of the error between the exact solution $u^\e$ and its asymptotic expansion to a certain order.
In Section \ref{sec_fem}, we define the boundary layer element which we incorporate in the finite element space, and provide the convergence analysis for the enriched finite element approximation using the duality argument.
In Section \ref{sec_num}, we present approximate boundary layer elements and perform numerical applications using these elements.

\section{\bf Asymptotic analysis} \label{sec2}
\subsection{Boundary fitted coordinates}
The weak formulation of $\eqref{eq_main}_1$ reads\\
To find $u^\e:(0,T) \longrightarrow H^1_0(\Omega)$ such that
\be
\begin{split}
	   (\pa_t u^{\e},v) +   \e (\nabla u^{\e},\nabla v) 
	   & = (f,v), \quad \forall v \in H^1_0(\Omega),\\
	   u^\e(0) & = u_0.
\end{split}	  
\label{eq_main_var_form}
\ee
The formal limit problem of \eqref{eq_main}, namely when $\e \rightarrow 0$, is easily seen to be
\be
	\begin{cases}
	\dfrac{\partial u^0}{\pa t} = f, \quad \text{in  }D \times (0,T), \\
	u^0(x,y,0) = u_0(x,y) \quad \text{on  } D.
	\end{cases} \label{eq_lim}
\ee
Hence, we easily find the explicit solution $u^0(x,y,t) = u_0(x,y) + \int^t_0 f(x,y,s) ds$.
To investigate the boundary layer in the circular domain, we first introduce the boundary fitted coordinates as in \cite{HJT13}
\begin{equation} \notag
	\begin{cases}
	x = (1-\xi) \cos \eta,\\
	y = (1-\xi) \sin \eta,
	\end{cases}
\end{equation}
where $\xi=1-r$, $r$ is the distance to the center, and $\eta$ is the polar angle from $Ox$.
We then define the domains $D^*$ and $D_{\f{1}{2}}$ as follows:
\be
	\begin{split}
	& D^* = \{(\eta,\xi) \in (0,2 \pi) \times (0,1)  \},\\
	& D_{\f{1}{2}} = \{(\eta,\xi) \in D^*: \xi \leq \f{1}{2}\}.
	\end{split} \label{eq_domain}
\ee
Using this change of variables, we obtain
\be
	  \frac{\partial}{\partial x} = -\cos\eta \frac{\partial}{\partial \xi} - \frac{\sin\eta}{1-\xi} \frac{\partial}{\partial \eta}, \quad 
	  \frac{\partial}{\partial y} = -\sin\eta \frac{\partial}{\partial \xi} + \frac{\cos\eta}{1-\xi} \frac{\partial}{\partial \eta},
	\label{eq_chg}
\ee
and then $\eqref{eq_main}_1$ becomes
\be
	  L_\e(u^\e) = \frac{\pa u^\e}{\pa t} -\epsilon \Delta u^{\epsilon} = 
	  \frac{\pa u^\e}{\pa t} - \frac{\epsilon}{(1-\xi)^2} \frac{\partial ^2 u^{\epsilon}}{\partial \eta^2} + \frac{\epsilon}{1-\xi} \frac{\partial u^{\epsilon}}{\partial \xi} - \epsilon \frac{\partial ^2 u^{\epsilon}}{\partial \xi^2} 
	  = f.
	\label{eq_main_chg}
\ee
\subsection{Convergence analysis}
We first look for the expansion of $u^\e$ at first order:
\be
	u^\e \simeq u^0 + \theta^0,
\ee
where $u^0$ is the solution of $\eqref{eq_lim}_1$ and $\theta^0$ is the first corrector. Setting $f=0$ in \eqref{eq_main_chg}, and using the stretched variable $\xi = \e^{\alpha} \bar \xi$, \eqref{eq_main_chg} is transformed to
\be
	\dfrac{\pa u^\e}{\pa t} - \dfrac{\epsilon}{(1-\e^{\alpha} \bar \xi)^2} \frac{\partial ^2 u^{\epsilon}}{\partial \eta^2} + \dfrac{\epsilon^{1- \alpha}}{1- \e^{\alpha} \bar \xi} \dfrac{\partial u^{\epsilon}}{\partial \bar \xi} - \epsilon^{1- 2 \alpha} \dfrac{\partial ^2 u^{\epsilon}}{\partial \bar \xi^2}=0.
	\label{eq_main_str}
\ee
The dominating terms in \eqref{eq_main_str} are
\be
	\dfrac{\pa u^\e}{\pa t}  - \epsilon^{1- 2 \alpha} \dfrac{\partial ^2 u^{\epsilon}}{\partial \bar \xi^2} = 0,
\label{eq_dom}
\ee
and thus the reasonable thickness of the boundary layer is $\alpha = {\f{1}{2}}$ so that $\xi = \e^{\f{1}{2}} \bar \xi$, with $\bar \xi = O(1)$ in the boundary layer.\\ Then, we obtain the equations for the corrector $\theta^0 = \theta^0(\eta,\xi,t)$:
\be
	\begin{cases}
	\dfrac{\pa \theta^0}{\pa t} - \dfrac{\pa^2 \theta^0}{\pa \bar \xi^2} = 0, \quad \text{in  } D^* \times (0,T),\\
	\theta^0(\eta,0,t) = -u^0, \quad \text{at  } \bar \xi =0,\\
	\theta^0(\eta,\xi,0) = 0,\\
	\theta^0 \longrightarrow 0, \quad \text{as  } \bar \xi \longrightarrow \infty.
	\end{cases} \label{eq_cor}
\ee
The explicit solution $\theta^0$ is
\be
	\theta^0 = -\int^t_0 I(\xi,t-s) \dfrac{\pa u^0}{\pa t}(\eta,0,s) ds,
	\label{sol_cor}
\ee
where 
\be
	\begin{split}
	& I(\xi,t)=\text{erfc}\Big (\dfrac{\xi}{\sqrt{2 \e t}} \Big ),\\
	& \text{erfc}(z) = 1 - \text{erf}(z) = \sqrt{\dfrac{2}{\pi}} \int^{\infty}_{z} \exp \Big (-\dfrac{y^2}{2} \Big ) dy,\\
	& \text{erf}(z) = \sqrt{\dfrac{2}{\pi}} \int^z_0 \exp \Big (-\dfrac{y^2}{2} \Big ) dy;
	\end{split}
\label{eq_err_fuc}	
\ee
see e.g. \cite{Cannon}.
To avoid the singularity of $\theta^0$ at the origin (at $\xi=1$), we introduce the approximation 
\be
	\bar \theta^0(\eta,\xi,t) = \theta^0 \delta(\xi)
\label{e:cut_cor}	
\ee
where $\delta(\xi)$ is a smooth cut-off function such that $\delta(\xi) = 1$ for $0 \leq \xi \leq 1/4$ and $\delta(\xi) = 1$ for $1/2 \leq \xi \leq 1$.\\
We need an additional compatibility condition as in \cite{JPT11} to estimate the higher order derivatives of $\theta_0$; namely
\be
	  \f{\partial u^0}{\partial t}(\eta,0,0) = 0.
\ee
Due to $\eqref{eq_lim}_1$
\be
	  \f{\partial u^0}{\partial t} \Big |_{t=0} = f \Big |_{t=0}, \text{  on  } \partial \Omega,
\ee
and hence we require the compatibility condition
\be
	  f(x,y,0) = 0, \text{  on  } \partial \Omega.
\label{e:comp_cond2}		  
\ee
We now recall the following lemma from \cite{GHT10}.
% \begin{lemma}
%   	For $m=1,2$, we find the pointwise estimates of $I= I(\eta,\xi,t)$ such that
%   	\be
% 	      |\partial^m_{\xi} I| \leq \kappa_m (\e t)^{-m+\f{1}{2}} |\xi|^{m-1} \exp \Big(-\f{\xi^2}{4 \e t} \Big),
%   	\ee
%   	where $\kappa_m$ is a positive constant which depends on $m$.
% \label{lem_pwbd1}
% \end{lemma}
% \begin{lemma}
% 	  For any $p \geq 0$, we have
% 	  \be
% 	      \Big \| (\e^{-\f{1}{2}}\xi )^p   \exp \Big(-\f{\xi^2}{4 \e t} \Big)   \Big \|_{L^2(D_{\f{1}{2}})} \leq \kappa_T \e^{\f{1}{4}},
% 	  \ee
% 	  where $\kappa_T$ is a positive constant which depends on $T$.
% \label{lem_pwbd0}      
% \end{lemma}
\begin{lemma} \label{lem_pwbd2}
 For $m=0,1$ and $k \geq 0$, the following pointwise estimates hold for $\theta^0= \theta^0(\eta,\xi,t)$:
 \begin{equation} \label{eq_est1}
	 |\partial^k_\eta \partial^m_{\xi} \theta^0| \leq \kappa \e^{-\f{m}{2}} \exp \Big(-\f{\xi^2}{4 \e t} \Big).
 \end{equation}
 Moreover, for $(\eta,\xi) \in D_{\f{1}{2}}$, and for $j=0$ and $m \geq 2$, and for $j \geq 1$ and $m \geq 0$, we find
 \be \label{eq_est2}
	|\partial^j_t \partial^k_{\eta} \partial^m_{\xi} \theta^0| \leq \kappa \e^{-j-m+\f{1}{2}} \int^t_0 (1+s^{-2j-m+\f{1}{2}}) \exp \Big(-\f{\xi^2}{4 \e s} \Big)ds.
 \ee
 Here and below $\kappa$ is a positive constant which is independent of $\e$ and may depend on the data and which may be different at different occurrences.
\end{lemma}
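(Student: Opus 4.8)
The plan is to work directly from the explicit representation \eqref{sol_cor}, which exhibits $\theta^0$ as the time convolution of the one–dimensional complementary error function $I(\xi,\cdot)$ with the boundary datum $g(\eta,s):=\partial_t u^0(\eta,0,s)$. By the assumed regularity of $f,u_0$ and the compatibility conditions \eqref{e:comp_cond2}, every $\eta$–derivative of $g$ is bounded on $\partial D\times[0,T]$ (and vanishes at $t=0$); since $\eta$ enters \eqref{sol_cor} only through $g$, the factor $\partial^k_\eta$ is harmless throughout and I will suppress it. Everything then reduces to (i) pointwise bounds on the $\xi$–derivatives of the kernel $I$ and (ii) a couple of elementary one–dimensional integral inequalities. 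Two facts about $I(\xi,\tau)=\text{erfc}(z)$, $z=\xi/\sqrt{2\e\tau}$, will be used repeatedly: it solves the one–dimensional heat equation $\partial_\tau I=\e\,\partial^2_\xi I$ (direct computation), and, since $\partial^N_\xi I=(2\e\tau)^{-N/2}\,\text{erfc}^{(N)}(z)$ and $\text{erfc}^{(N)}(z)=(-1)^N\sqrt{2/\pi}\,P_{N-1}(z)e^{-z^2/2}$ with $P_{N-1}$ a polynomial of degree $N-1$ (a Hermite polynomial),
\[
  \big|\partial^N_\xi I(\xi,\tau)\big|\le \kappa\,(2\e\tau)^{-N/2}\big(1+|z|^{N-1}\big)e^{-z^2/2},\qquad z=\frac{\xi}{\sqrt{2\e\tau}},\quad N\ge 1 .
\]

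For \eqref{eq_est1} I would differentiate \eqref{sol_cor} under the integral sign. For $m=0$ one uses the elementary bound $\text{erfc}(z)\le e^{-z^2/2}$ on $[0,\infty)$ (check that $h(z):=e^{-z^2/2}-\text{erfc}(z)$ has $h(0)=0$, $h'(z)=e^{-z^2/2}(\sqrt{2/\pi}-z)$, and $h(+\infty)=0^+$), whence $|I(\xi,t-s)|\le e^{-\xi^2/(4\e(t-s))}\le e^{-\xi^2/(4\e t)}$ because $t-s\le t$; integrating in $s\in[0,T]$ gives $|\theta^0|\le\kappa\,e^{-\xi^2/(4\e t)}$. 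For $m=1$ the kernel derivative $\partial_\xi I(\xi,\tau)=-\sqrt{2/\pi}\,(2\e\tau)^{-1/2}e^{-\xi^2/(4\e\tau)}$ carries no polynomial prefactor, so the estimate holds on all of $D^*$; after the substitution $\tau=t-s$ one is reduced to the sub-inequality $\int_0^t\tau^{-1/2}e^{-a/\tau}\,d\tau\le\kappa\,e^{-a/t}$ with $a=\xi^2/(4\e)$, which I would prove by splitting cases: if $a\le t/2$ then $e^{-a/t}\ge e^{-1/2}$ is bounded below while $\int_0^t\tau^{-1/2}d\tau=2\sqrt t$; if $a\ge t/2$ then $\tau\mapsto\tau^{-1/2}e^{-a/\tau}$ is increasing on $(0,t]$, so the integral is $\le\sqrt t\,e^{-a/t}$. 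This gives $|\partial_\xi\theta^0|\le\kappa\,\e^{-1/2}e^{-\xi^2/(4\e t)}$.

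The heart of the matter, and the step I expect to cost the most work, is the differentiation identity behind \eqref{eq_est2}: for $\xi>0$,
\[
  \partial^j_t\partial^m_\xi\theta^0=-\,\e^{\,j}\int_0^t\big(\partial^{\,2j+m}_\xi I\big)(\xi,t-s)\,g(s)\,ds .
\]
To obtain it I would differentiate \eqref{sol_cor} repeatedly in $t$; each application of the Leibniz rule throws off a boundary term of the form $\big(\partial^{\,i}_\tau I\big)(\xi,0^+)g(t)$, which vanishes because $\partial^{\,i}_\tau I=\e^{\,i}\partial^{\,2i}_\xi I$ and, for $\xi>0$, the factor $e^{-\xi^2/(4\e\tau)}$ in $\partial^{\,2i}_\xi I(\xi,\tau)$ beats every negative power of $\tau$ as $\tau\to0^+$; the surviving $\tau$–derivatives are then traded for $\xi$–derivatives via $\partial_\tau I=\e\,\partial^2_\xi I$. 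With the identity in hand I insert the kernel bound with $N=2j+m$, expand $1+|z|^{N-1}$ so that the Gaussian keeps the exact exponent $e^{-\xi^2/(4\e\tau)}$ while the polynomial part produces a factor $\xi^{\,2j+m-1}$ — bounded on $D_{\f{1}{2}}$ since there $\xi\le\tfrac12$ and $2j+m-1\ge1$ in all the listed cases — and, after $\tau=t-s$, compare the two resulting integrands with $(1+\tau^{-2j-m+1/2})e^{-\xi^2/(4\e\tau)}$; the needed power inequalities $-j-\tfrac m2\ge-2j-m+\tfrac12$ and $\e^{-m/2}\le\e^{-j-m+1/2}$ hold precisely because $j+\tfrac m2\ge\tfrac12$ in both regimes $j\ge1$ and $j=0,\,m\ge2$. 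This yields \eqref{eq_est2}. Note that the restriction to $D_{\f{1}{2}}$ enters only here, to keep these $\xi$–powers bounded; near $\xi=1$ the change of variables \eqref{eq_chg} degenerates anyway, which is also why the cut-off $\bar\theta^0$ of \eqref{e:cut_cor} is used. The remaining bookkeeping of powers of $\e$, $\tau$ and $\xi$ is tedious but entirely mechanical.
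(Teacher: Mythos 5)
The paper does not actually prove Lemma \ref{lem_pwbd2}: it is imported from \cite{GHT10} (``We now recall the following lemma from \cite{GHT10}''), so there is no in-paper argument to compare yours against. Your direct proof from the explicit formula \eqref{sol_cor} is correct and self-contained. The kernel facts you use are right for the normalization \eqref{eq_err_fuc}: $\partial_\tau I=\e\,\partial^2_\xi I$, $\partial^N_\xi I=(2\e\tau)^{-N/2}\text{erfc}^{(N)}(z)$ with $\text{erfc}^{(N)}$ a Hermite polynomial times $e^{-z^2/2}=e^{-\xi^2/(4\e\tau)}$; the comparison $\text{erfc}(z)\le e^{-z^2/2}$ together with the case split $a\le t/2$ versus $a\ge t/2$ for $\int_0^t\tau^{-1/2}e^{-a/\tau}\,d\tau$ yields \eqref{eq_est1}; the vanishing of the boundary terms $\partial^i_\tau I(\xi,0^+)$ for fixed $\xi>0$ legitimizes the convolution identity $\partial^j_t\partial^m_\xi\theta^0=-\e^j\int_0^t\partial^{2j+m}_\xi I\,g\,ds$; and the single inequality $j+\tfrac m2\ge\tfrac12$ is indeed what closes both regimes of \eqref{eq_est2}. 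This is presumably the same route as in \cite{GHT10} (explicit erfc kernel plus polynomial--Gaussian bounds), but writing it out is a genuine addition since the paper leans on the lemma without reproducing a proof. Two cosmetic points: the restriction to $D_{\f{1}{2}}$ is not what saves your $\xi$-powers, since $\xi^{2j+m-1}\le 1$ already holds on all of $D^*$ where $\xi\le 1$; and for the intermediate powers $|z|^p$ with $0\le p\le 2j+m-1$ coming from the full Hermite polynomial, the $\e$-exponent $-\f{m}{2}-\f{p}{2}$ and the $\tau$-exponent $-j-\f{m}{2}-\f{p}{2}$ are monotone in $p$ and stay between the two extreme values you checked, so the ``mechanical bookkeeping'' you defer does go through.
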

\noindent
Furthermore, we recall the definition of an $e.s.t.$.
\begin{definition}
A function or a constant depending on $\e$, $\tilde g_\e$, $\tilde g^\e$ is called an exponentially small term, and denoted e.s.t., if there exist $\beta$, $\beta' > 0$ such that for any $k\geq 0$, there exists a constant $c_{\beta,\beta',k} >0$ independent of $\e$ such that
\be
\|\tilde g^\e\|_{H^k} \leq c_{\beta,\beta',k} \exp(-\beta/\e^{\beta'}).
\ee
Of course $\|\tilde g_\e\|_{H^k} = |\tilde g^\e|$ if $\tilde g^\e$ is a constant.
\end{definition}
\begin{lemma} \label{lem3}
 For $j,k,m \geq 0$, we find that
 \be
      \big \|\partial^j_t \partial^k_{\eta} \partial^m_{\xi} (\theta^0 - \bar \theta^0) \big \|_{L^2(D_{\f{1}{2}})} \text{ is an e.s.t.,}
 \label{eq_lem3}
 \ee
 for $t \in [0,T]$.
\end{lemma}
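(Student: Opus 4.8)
The plan is to exploit the fact that, on the half-domain $D_{\f{1}{2}}$, one has $\theta^0-\bar\theta^0=\theta^0\,(1-\delta(\xi))$, and the factor $1-\delta(\xi)$ vanishes identically for $0\le\xi\le\f{1}{4}$. Hence $\theta^0-\bar\theta^0$, together with all of its derivatives, is supported in the strip $\{\f{1}{4}\le\xi\le\f{1}{2}\}$, on which $\xi$ stays bounded away from the boundary $\xi=0$; there the Gaussian weight $\exp(-\xi^2/(4\e t))$ occurring in Lemma \ref{lem_pwbd2} is at most $\exp(-1/(64\,\e t))\le\exp(-1/(64\,\e T))$ uniformly for $t\in[0,T]$, which is an e.s.t. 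The whole proof is the quantitative version of this remark.

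First I would expand by the Leibniz rule in $\xi$, using that $\delta=\delta(\xi)$ commutes with $\partial_t$ and $\partial_\eta$:
\[
 \partial^j_t\partial^k_\eta\partial^m_\xi(\theta^0-\bar\theta^0)
 =\sum_{l=0}^{m}\binom{m}{l}\,\big(\partial^j_t\partial^k_\eta\partial^{\,m-l}_\xi\theta^0\big)\,\partial^{\,l}_\xi\big(1-\delta(\xi)\big).
\]
Each $\partial^{\,l}_\xi(1-\delta)$ is a bounded $C^\infty$ function vanishing for $\xi\le\f{1}{4}$, so it suffices to estimate $\partial^j_t\partial^k_\eta\partial^{\,m-l}_\xi\theta^0$ on $D_{\f{1}{2}}\cap\{\xi\ge\f{1}{4}\}$. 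When $j=0$ and $m-l\le 1$, estimate \eqref{eq_est1} gives at once the pointwise bound $\kappa\,\e^{-1/2}\exp(-1/(64\,\e T))$, an e.s.t. In the remaining cases ($j=0$, $m-l\ge2$, or $j\ge1$), estimate \eqref{eq_est2} reduces the matter to bounding, for $\xi\ge\f{1}{4}$,
\[
 \int^t_0\big(1+s^{-N}\big)\exp\!\Big(-\f{\xi^2}{4\e s}\Big)\,ds,\qquad
 N:=2j+(m-l)-\f{1}{2}\ge\f{3}{2}.
\]
The contribution of the ``$1$'' is at most $t\exp(-\xi^2/(4\e t))\le T\exp(-1/(64\,\e T))$. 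For the singular term, the substitution $u=1/s$ turns it into $\int^\infty_{1/t}u^{N-2}\exp(-cu/\e)\,du$ with $c=\xi^2/4\ge\f{1}{64}$; since $u\ge 1/t\ge 1/T$ we may write $\exp(-cu/\e)\le\exp(-1/(128\,\e T))\exp(-cu/(2\e))$, and, $N$ being $>1$, the remaining integral is dominated by $\int^\infty_0 u^{N-2}\exp(-cu/(2\e))\,du=\Gamma(N-1)(2\e/c)^{N-1}$, giving the Laplace-type bound $\kappa\,\e^{N-1}\exp(-1/(128\,\e T))$. In every case one gets a fixed negative power of $\e$ times $\exp(-\beta/\e)$ with, say, $\beta=1/(128\,T)$, i.e.\ an e.s.t.

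Since all these pointwise estimates are uniform in $(\eta,\xi)\in D_{\f{1}{2}}$ and in $t\in[0,T]$, and $D_{\f{1}{2}}$ has finite measure, taking the $L^2(D_{\f{1}{2}})$ norm preserves the bound; and a fixed negative power of $\e$ times $\exp(-\beta/\e)$ is again an e.s.t.\ (shrink $\beta$ slightly). I expect the one genuinely delicate point to be the estimate of the time integral coming from \eqref{eq_est2}: its integrand is singular as $s\to0^+$ while the prefactors in \eqref{eq_est2} carry growing negative powers of $\e$, so one must verify that the exponential decay of $\exp(-\xi^2/(4\e s))$ still dominates. The change of variables $u=1/s$ makes this transparent, and it is the only computation I would carry out in full.
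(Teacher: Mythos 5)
Your proposal is correct and follows essentially the same route as the paper: both exploit that $\theta^0-\bar\theta^0=\theta^0(1-\delta)$ is supported in $\{\xi\ge\f{1}{4}\}$, invoke \eqref{eq_est1} for the low-order cases and \eqref{eq_est2} for the rest, and show the singular $s^{-N}$ factor in the time integral is absorbed by the Gaussian $\exp(-\xi^2/(4\e s))$. The only differences are cosmetic: you tame the time integral via the substitution $u=1/s$ and a Gamma-function bound, where the paper multiplies and divides by $(\xi/\sqrt{4\e s})^{2l}$ and uses $x^re^{-x^2}\le\kappa e^{-x^2/2}$, and you are somewhat more careful than the paper in writing out the Leibniz expansion of $\partial_\xi^m[\theta^0(1-\delta)]$.
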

\begin{proof}
 For $m=0,1$, by \eqref{eq_est1}, we deduce that
 \be \label{lem3_eq2}
	  \begin{split}
	  \big \| \partial^k_{\eta} \partial^m_{\xi}  (\theta^0 - \bar \theta^0) \big \|^2_{L^2(D_{\f{1}{2}})} 
	    &  = \int^{2 \pi}_0 \int^{\f{1}{2}}_{\f{1}{4}} |\pa^k_{\eta} \pa^m_{\xi} [\theta^0(1-\delta)]|^2 d \xi d \eta \\
	    & \leq \int^{2 \pi}_0 \int^{\f{1}{2}}_{\f{1}{4}} \Big|\kappa \e^{-\f{m}{2}} \exp \Big(-\f{\xi^2}{4 \e t} \Big)\Big|^2 d \xi d \eta. \\
	  \end{split}
 \ee
 Noting that
 \be \notag
	  \int^{\f{1}{2}}_{\f{1}{4}} \Big|\exp \Big(- \f{\xi^2}{4 \e t} \Big) \Big|^2 d \xi
	  \leq  \kappa \bigg( \exp \Big(- \f{(1/4)^2}{4 \e t} \Big) \bigg)^2,
 \ee
 we obtain
 \be
	   \big \| \partial^k_{\eta} \partial^m_{\xi}  (\theta^0 - \bar \theta^0) \big \|_{L^2(D_{\f{1}{2}})} 
	   \text{ is an e.s.t..}
 \label{eq_est3}
 \ee
 For $j=0$, $m \geq 2$, or $j \geq 1$, $m \geq 0$,
 thanks to the pointwise estimate in \eqref{eq_est2},
 we find 
 \be
	  \big \| \partial^j_t \partial^k_{\eta} \partial^m_{\xi}  (\theta^0 - \bar \theta^0) \big \|^2_{L^2(D_{\f{1}{2}})}
	   \leq
	   \kappa \int^{2 \pi}_{0} \int_{\f{1}{4}}^{\f{1}{2}} \Big( \e^{-j-m+\f{1}{2}} \int^t_0 (1+s^{-2j-m+\f{1}{2}}) \exp \Big(-\f{\xi^2}{4 \e s} \Big)ds \Big)^2 d \xi d \eta.
  \label{eq_est_bdd}	   
 \ee
 We then focus on the problematic part in \eqref{eq_est_bdd}: 
 \be
	  \int^{2 \pi}_{0} \int_{\f{1}{4}}^{\f{1}{2}} \Big ( \int^t_0 s^{-l} \exp \Big( -\f{\xi^2}{4 \e s} \Big)  ds \Big)^2 d \xi d \eta, 
 \ee
 where $l=2j+m-\f{1}{2}$. Since $x^r \exp(-x^2) \leq \kappa \exp(-x^2/2)$ for $r>0$, where $\kappa$ depends only on $r$, we find
 \be
 \begin{split}
	  & \int^t_0 s^{-l} \exp \Big( -\f{\xi^2}{4 \e s} \Big) ds \\
	  & = \int^t_0 s^{-l} \exp \Big( -\f{\xi^2}{4 \e s} \Big) \Big(\f{\xi}{\sqrt{4 \e s}} \Big)^{2l} \Big(\f{\xi}{\sqrt{4 \e s}} \Big)^{-2l} ds \\
	  & \leq \kappa \int^t_0 s^{-l} \exp \Big( -\f{\xi^2}{ 8 \e s} \Big) \Big(\f{\xi}{\sqrt{4 \e s}} \Big)^{-2l} ds.
 \end{split}	  
 \label{eq2.21}
 \ee
 Hence, we obtain
 \be
 \begin{split}
	 & \int^{2 \pi}_{0} \int^{\f{1}{2}}_{\f{1}{4}} \Big( \int^t_0 s^{-l} \exp \Big( -\f{\xi^2}{4 \e s} \Big) ds \Big)^2 d \xi d \eta \\
	 & \leq \text{ (by }\eqref{eq2.21})\\
	 & \leq \kappa \int^{2 \pi}_{0} \int^{\f{1}{2}}_{\f{1}{4}}  \Big ( \int^t_0 s^{-l} \exp \Big( -\f{\xi^2}{ 8 \e s} \Big) \Big(\f{\xi}{\sqrt{4 \e s}} \Big)^{-2l} ds \Big)^2 d \xi d \eta \\
	 & \leq \kappa \int^{2 \pi}_{0} \int^{\f{1}{2}}_{\f{1}{4}} \exp \Big( -\f{\xi^2}{ 4 \e t} \Big) \Big(\f{\xi}{\sqrt{4 \e}} \Big)^{-4l} d \xi d \eta \\
	 & \leq \kappa \exp \Big( - \f{(1/4)^2}{4 \e t} \Big)\\
	 & \leq e.s.t. \text{  (for $t \in [0,T]$)}.
 \end{split}	 
 \ee
Then, the lemma follows.
\end{proof}
For the error analysis we borrow the following lemma from \cite{JPT11}.
\begin{lemma}
 Assume that the compatibility conditions \eqref{eq:comp} and \eqref{e:comp_cond2} hold. For $0 \leq m \leq 4$ and $k \geq 0$, there exists a positive constant $\kappa$ independent of $\epsilon$ such that
 \be
	  \Big \| \f{\partial^{m+k} \theta^0}{\partial \xi^m \partial \eta^k}\Big\|_{L^2(D_{\f{1}{2}}) }
	  \leq \kappa \epsilon^{-\f{m}{2}+\f{1}{4}},
	  \quad \text{for a.e.  } t \in [0,T].
 \ee
 \label{lem_pwbd3}
\end{lemma}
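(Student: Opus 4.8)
The plan is to split the range of $m$ into $\{0,1\}$ and $\{2,3,4\}$, and in either case to reduce matters to the elementary Gaussian bound
\be
  \int_0^{\f{1}{2}}\exp\Big(-\f{\xi^2}{2\e t}\Big)\,d\xi
  \;\le\; \int_0^{\infty}\exp\Big(-\f{\xi^2}{2\e t}\Big)\,d\xi
  \;=\; \sqrt{\f{\pi\e t}{2}} \;\le\; \kappa\sqrt{\e},\qquad t\in[0,T].
\label{gaussbd}
\ee
Since $\pa_\eta$ commutes with $\pa_\xi$ and $\pa_t$ and the kernel $\text{erfc}(\xi/\sqrt{2\e\sigma})$ in \eqref{sol_cor} does not involve $\eta$, applying $\pa_\eta^k$ only amounts to replacing the boundary datum $\pa_t u^0(\eta,0,\cdot)$ by $\pa_\eta^k\pa_t u^0(\eta,0,\cdot)$, still bounded on $\pa\Omega\times[0,T]$; so it is enough to treat $k=0$, with $\kappa$ allowed to depend on $k$.

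For $m=0$ and $m=1$ the estimate is immediate from \eqref{eq_est1}: on $D_{\f{1}{2}}$ one has $|\pa_\xi^m\theta^0|\le\kappa\,\e^{-m/2}\exp(-\xi^2/(4\e t))$, and squaring and using \eqref{gaussbd} gives $\|\pa_\xi^m\theta^0\|_{L^2(D_{\f{1}{2}})}^2\le\kappa\,\e^{-m}\sqrt{\e}=\kappa\,\e^{-m+\f{1}{2}}$, which is the claimed bound.

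For $m=2,3,4$ one cannot simply insert the pointwise estimate \eqref{eq_est2} into the $L^2$ norm over $D_{\f{1}{2}}$: near the centre $\xi=0$ the factor $s^{-2j-m+\f{1}{2}}$ in \eqref{eq_est2}, after the inner $s$-integration, leaves behind a power $\xi^{1-2l}$ with $l=2j+m-\f{1}{2}>1$, which is not square integrable at $\xi=0$; thus \eqref{eq_est2} is too crude there. Instead I would exploit the parabolic structure. From \eqref{eq_cor} — equivalently by differentiating \eqref{sol_cor} directly — one checks that, in the original variable $\xi$ (with $\xi=\e^{1/2}\bar\xi$ as above), the corrector solves $\pa_t\theta^0=\e\,\pa_\xi^2\theta^0$ for $\xi,t>0$; iterating this identity yields
\be
  \pa_\xi^m\theta^0 \;=\; \e^{-\lfloor m/2\rfloor}\,\pa_t^{\lfloor m/2\rfloor}\,\pa_\xi^{\,m-2\lfloor m/2\rfloor}\,\theta^0 .
\label{redux}
\ee
So it remains to bound $\|\pa_t\theta^0\|_{L^2(D_{\f{1}{2}})}$ and $\|\pa_t\pa_\xi\theta^0\|_{L^2(D_{\f{1}{2}})}$ (for $m=2,3$) and $\|\pa_t^2\theta^0\|_{L^2(D_{\f{1}{2}})}$ (for $m=4$); in these quantities the loss near $\xi=0$ no longer occurs, because of a cancellation built into the heat equation.

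To bring that cancellation out, I would differentiate the representation \eqref{sol_cor} in $t$ under the integral. Writing $g(\eta,s)=\pa_t u^0(\eta,0,s)$ and substituting $\sigma=t-s$, \eqref{sol_cor} reads $\theta^0=-\int_0^t\text{erfc}(\xi/\sqrt{2\e\sigma})\,g(\eta,t-\sigma)\,d\sigma$. Differentiating in $t$ and using the compatibility condition \eqref{e:comp_cond2}, which says precisely that $g(\eta,0)=0$, the boundary term from the upper limit drops, and one gets $\pa_t\theta^0=-\int_0^t\text{erfc}(\xi/\sqrt{2\e\sigma})\,(\pa_s g)(\eta,t-\sigma)\,d\sigma$; one further differentiation gives $\pa_t^2\theta^0=-\text{erfc}(\xi/\sqrt{2\e t})\,(\pa_s g)(\eta,0)-\int_0^t\text{erfc}(\xi/\sqrt{2\e\sigma})\,(\pa_s^2 g)(\eta,t-\sigma)\,d\sigma$, the retained boundary term being harmless. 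Next I would use $|\text{erfc}(\xi/\sqrt{2\e\sigma})|\le\kappa\exp(-\xi^2/(4\e\sigma))$ and $|\pa_\xi\text{erfc}(\xi/\sqrt{2\e\sigma})|=(\pi\e\sigma)^{-1/2}\exp(-\xi^2/(4\e\sigma))$, together with the monotonicity $\exp(-\xi^2/(4\e\sigma))\le\exp(-\xi^2/(4\e t))$ for $0<\sigma\le t$, which at once gives
\be
  \int_0^t\exp\Big(-\f{\xi^2}{4\e\sigma}\Big)\,d\sigma\le t\,\exp\Big(-\f{\xi^2}{4\e t}\Big),\qquad
  \int_0^t\sigma^{-1/2}\exp\Big(-\f{\xi^2}{4\e\sigma}\Big)\,d\sigma\le 2\sqrt{t}\,\exp\Big(-\f{\xi^2}{4\e t}\Big).
\ee
Combining these with \eqref{redux} produces, on $D_{\f{1}{2}}$ and for $t\in[0,T]$, clean pointwise bounds of the form $|\pa_\xi^m\theta^0|\le\kappa\,\e^{-m/2}\exp(-\xi^2/(4\e t))$ for $m=2,3,4$ (the $t$-dependence being a harmless positive power, absorbed into $\kappa$); squaring and invoking \eqref{gaussbd} then yields $\|\pa_\xi^m\theta^0\|_{L^2(D_{\f{1}{2}})}\le\kappa\,\e^{-m/2+\f{1}{4}}$, which finishes the argument. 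The step I expect to be the main obstacle is exactly the behaviour near $\xi=0$: the ``obvious'' bound \eqref{eq_est2} is valid but not sharp enough there — it loses inverse powers of $\xi$ after the $s$-integration — so one must recognise the hidden cancellation coming from $\pa_t\theta^0=\e\,\pa_\xi^2\theta^0$, and the bookkeeping needed to expose it (differentiating the $\text{erfc}$/Duhamel formula in $t$ and discarding the leading boundary term via \eqref{e:comp_cond2}) is the delicate part; everything else is the elementary Gaussian estimate \eqref{gaussbd}.
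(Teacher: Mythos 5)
Your proposal is correct. Note first that the paper itself contains no proof of Lemma \ref{lem_pwbd3}: it is quoted from \cite{JPT11}, so there is no in-paper argument to compare against, and your write-up supplies a self-contained proof where the paper supplies only a citation. Your two structural observations are both sound and both necessary. For $m=0,1$, the pointwise bound \eqref{eq_est1} together with $\int_0^{1/2}\exp(-\xi^2/(2\e t))\,d\xi\le\kappa\sqrt{\e}$ gives the stated rate directly. For $m=2,3,4$ you are right that \eqref{eq_est2} is too weak: after the $s$-integration it leaves an inverse power of $\xi$ that is not square integrable near $\xi=0$, so one must instead use the identity $\partial_t\theta^0=\e\,\partial_\xi^2\theta^0$ (valid for $\xi>0$ because $I(\xi,0^+)=0$) to trade two $\xi$-derivatives for one $t$-derivative at the cost of $\e^{-1}$, and then differentiate the Duhamel representation in $t$, using the compatibility condition $\partial_t u^0(\eta,0,0)=f|_{t=0,\,\partial D}=0$ to cancel the boundary term arising at the upper limit of integration. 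I checked the exponent bookkeeping: your bounds $\|\partial_t\theta^0\|_{L^2(D_{\f{1}{2}})},\ \|\partial_t^2\theta^0\|_{L^2(D_{\f{1}{2}})}\le\kappa\e^{\f{1}{4}}$ and $\|\partial_t\partial_\xi\theta^0\|_{L^2(D_{\f{1}{2}})}\le\kappa\e^{-\f{1}{4}}$, fed through $\partial_\xi^m\theta^0=\e^{-\lfloor m/2\rfloor}\partial_t^{\lfloor m/2\rfloor}\partial_\xi^{\,m-2\lfloor m/2\rfloor}\theta^0$, give exactly $\kappa\e^{-\f{m}{2}+\f{1}{4}}$ for $m=2,3,4$. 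The reduction to $k=0$ is also legitimate, since the kernel is independent of $\eta$ and the compatibility condition $\partial_t u^0(\eta,0,0)=0$ holds for every $\eta$, hence persists under $\partial_\eta^k$. A pleasant byproduct of your argument is that it upgrades the pointwise estimate \eqref{eq_est1} to $|\partial_\eta^k\partial_\xi^m\theta^0|\le\kappa\,\e^{-\f{m}{2}}\exp(-\xi^2/(4\e t))$ for all $0\le m\le4$ under \eqref{e:comp_cond2}, which is stronger than what the quoted Lemma \ref{lem_pwbd2} provides for $m\ge2$.
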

We now define ``the error'' $w^0_\e = w^0_\e (\eta,\xi,t)= u^\e - u^0 - \bar \theta^0$; then from \eqref{eq_main}, \eqref{eq_lim}, we deduce that
\be
\begin{cases}
\dfrac{\pa w^0_\e}{\pa t} - \e \Delta w^0_\e = \e \Delta u^0 - L_\e(\bar \theta^0), \\
w^0_\e(\eta, \xi=0,t) = 0,\\
w^0_\e (\eta,\xi,t=0) = 0,
\end{cases} \label{eq_w0}
\ee
where $L_\e$ is as in \eqref{eq_main_chg}.
We multiply $\eqref{eq_w0}_1$ by $w^0_\e$ and integrate over $D$; then we obtain
\be
\begin{split}
	  & \f{1}{2} \f{d}{dt} \| w^0_\e \|^2_{L^2(D)} + \e \| \nabla w^0_\e \|_{L^2(D)} \\
	  &  \leq \e \| w^0_\e \|_{L^2(D)} \|\Delta u^0\|_{L^2(D)} + \| w^0_\e \|_{L^2(D)} \|L_\e (\bar \theta^0 ) \|_{L^2(D)}\\
	  & \leq \| w^0_\e \|^2_{L^2(D)} + \f{\e^2}{2} \|\Delta u^0\|^2_{L^2(D)} + \f{1}{2}\|L_\e (\bar \theta^0 ) \|^2_{L^2(D)}.
\end{split}	  
\label{eq_a_priori1}	  
\ee
We note that
\be
\begin{split}
	  \| L_\e (\bar \theta^0 ) \|^2_{L^2(D)}  & = \|L_\e (\bar \theta^0 ) \|^2_{L^2(D^*)}  = \|  L_\e (\bar \theta^0 ) \|^2_{L^2(D_{\f{1}{2}})}  \\
	  & \leq  \|  L_\e (\bar \theta^0 - \theta^0 ) \|^2_{L^2(D_{\f{1}{2}})} + \|  L_\e (\theta^0 ) \|^2_{L^2(D_{\f{1}{2}})}  \\
	  & \leq e.s.t. + \|  L_\e (\theta^0 ) \|^2_{L^2(D_{\f{1}{2}})},
\end{split}
\label{eq_split}
\ee
and we rewrite $L_\e(\theta^0)$, using $\eqref{eq_cor}_1$, as
\be \notag
	  L_\e(\theta^0) = - \dfrac{\e}{(1-\xi)^2}\frac{\partial^2 \theta^0}{\pa \eta^2} + \dfrac{\e}{1-\xi}\dfrac{\partial \theta^0}{\pa \xi}.
\ee
Then, from Lemma \ref{lem_pwbd3}, we find
\be
	  \|L_\e(\theta^0)\|_{L^2(D_{\f{1}{2}})} \leq \kappa \e^{\f{5}{4}} + \kappa \e^{\f{3}{4}} \leq \kappa \e^{\f{3}{4}}.
\label{eq_L_bound}	 
\ee
From \eqref{eq_split} and \eqref{eq_L_bound}, and using the regularity $|\Delta u^0|^2_{L^2(D)} \leq \kappa$, \eqref{eq_a_priori1} becomes
\be
	  \f{1}{2} \f{d}{dt} \| w^0_\e \|^2_{L^2(D)} + \e \| \nabla w^0_\e \|_{L^2(D)} \leq \| w^0_\e \|^2_{L^2(D)} + \kappa \e^{\f{3}{2}}.
\label{eq_a_priori2}	  
\ee
Using the Gronwall inequality, we find
\be
	  \|w^0_\e\|_{L^{\infty}(0,T;L^2(D))} \leq \kappa \e^{\f{3}{4}}.
\label{eq_est_L2}
\ee
Integrating \eqref{eq_a_priori2} over $[0,T]$, we obtain
\be
	  \int^T_0 \| \nabla w^0_\e \|^2_{L^2(D)} dt \leq \f{1}{\e} \int^T_0 (\|w^0_\e\|^2_{L^2(D)} + \kappa \e^{\f{3}{2}}) dt.
\ee
Hence, we find
\be
	  \|w^0_\e\|_{L^{2}(0,T;H^1(D))} \leq \kappa \e^{\f{1}{4}}.
\label{eq_est_H1_L2}
\ee
\begin{remark}
  In \cite{GHT10}, one can also find the same convergence results for $\|w^0_\e\|_{L^{\infty}(0,T;L^2(\Omega))}$ and $\|w^0_\e\|_{L^{2}(0,T;H^1(\Omega))}$ where $\Omega$ is a general smooth domain. Hence, \eqref{eq_est_L2} and \eqref{eq_est_H1_L2} are special cases of the results in \cite{GHT10}. However to develop the convergence analysis for the finite elements space, we need an estimate on $\|w^0_\e\|_{L^{\infty}(0,T;H^1(D))}$ and on $\|w^0_\e\|_{L^{\infty}(0,T;H^2(D))}$ which do not appear in \cite{GHT10}; see below.% To obtain those estimates, we reproduce the results \eqref{eq_est_L2} and \eqref{eq_est_H1_L2}.
  \label{rem1}
\end{remark}
We now look for estimates of $\|w^0_\e\|_{L^{\infty}(0,T;H^1(D))}$ and $\|w^0_\e\|_{L^{\infty}(0,T;H^2(D))}$ which play an important role in Section \ref{sec_fem}. We take the scalar product of $\eqref{eq_w0}_1$ in the space $L^2(D)$ with $-\Delta w^0_\e$, and we obtain
\be
\begin{split}
	    & \frac{1}{2} \frac{d}{dt} \|\nabla w^0_\e\|^2_{L^2(D)} + \e \|\Delta w^0_\e\|^2_{L^2(D)} \\
	    & \leq \e \|\Delta u^0\|_{L^2(D)} \| \Delta w^0_\e \|_{L^2(D)} + \|L_\e(\bar \theta^0 )\|_{L^2(D)} \|\Delta w^0_\e \|_{L^2(D)}\\
	    & \leq \frac{\e}{4}\|\Delta w^0_\e\|^2_{L^2(D)} + \e \|\Delta u^0\|^2_{L^2(D)} + \frac{\e}{4}\|\Delta w^0_\e\|^2_{L^2(D)} + \f{1}{\e}\|L_\e (\bar \theta^0) \|^2_{L^2(D)}.	    
\end{split}
\label{eq_est4}
\ee
Using \eqref{eq_split}, \eqref{eq_L_bound} and \eqref{eq_est4}, we find
\be
	   \frac{1}{2} \frac{d}{dt} \|\nabla w^0_\e\|^2_{L^2(D)} + \e \|\Delta w^0_\e\|^2_{L^2(D)} 
	   \leq \frac{\e}{2}\|\Delta w^0_\e\|^2_{L^2(D)} + \kappa \e^{\f{1}{2}}.
\label{eq_est5}	
\ee
Hence, by the Gronwall inequality, we obtain
\be
	  \|w^0_\e\|_{L^{\infty}(0,T;H^1(D))} \leq \kappa \e^{\f{1}{4}}.
\ee
Furthermore, integrating \eqref{eq_est5} over $[0,T]$, we also find
\be
	  \|w^0_\e\|_{L^{2}(0,T;H^2(D))} \leq \kappa \e^{-\f{1}{4}}.
\ee
To find the estimate on $\|w^0_\e\|_{L^{\infty}(0,T;H^2(D))}$, we first take the time derivative of \eqref{eq_w0} and write
\be
\begin{cases}
	    \dfrac{\partial^2 w^0_\e}{\partial t^2} - \e \Delta \big( \dfrac{\partial w^0_\e}{\partial t} \big)
	    = \e \Delta \big( \dfrac{\partial u^0}{\partial t} \big) - L_\e \big(\dfrac{\partial \bar \theta^0}{\partial t} \big),\\
	     \dfrac{\partial w^0_\e}{\partial t}(\eta, \xi=0,t) = 0,\\
	      \dfrac{\partial w^0_\e}{\partial t}(\eta,\xi,t=0) = \e \Delta u^0.
\end{cases}
\label{e:2nd_t_deri}
\ee
\begin{remark}
We derive the initial condition $\eqref{e:2nd_t_deri}_3$ using \eqref{eq_cor} and \eqref{eq_w0}.
We consider $\eqref{eq_w0}_1$ at $t=0$, we then obtain
\be
	  \f{\partial w^0_\e}{\partial t} \Big |_{t=0}
	  = \Big (
		    \e \Delta w^0_\e + \e \Delta u^0 - L_\e(\bar \theta^0)
	    \Big )
	    \Big |_{t=0}.
\label{e:w0_t0}	    
\ee
According to $\eqref{eq_w0}_3$, $w^0_\e$ vanishes identically at $t=0$ so that
\be
	  \Delta w^0_\e = 0, \text{  at  } t=0.
\ee
Using \eqref{eq_main_chg} the term $L_\e(\bar \theta^0)$ in \eqref{e:w0_t0} becomes:
\be
	  L_\e (\bar \theta^0)
	  = 
	  \f{\partial \bar \theta^0}{\partial t}
	  - \f{\e}{(1-\xi)^2}\f{\partial^2 \bar \theta^0}{\partial \eta^2}\
	  + \f{\e}{1-\xi} \f{\partial \bar \theta^0}{\partial \xi}
	  - \e \f{\partial^2 \bar \theta^0}{\partial \xi^2}.
\label{e:L_t0_1}	  
\ee
According to $\eqref{eq_cor}_3$ and \eqref{e:cut_cor} the spatial derivatives of $\theta^0$ at $t=0$ vanish, that is,
\be
	  \dfrac{\partial^2 \bar \theta^0}{\partial \eta^2}  =0, \quad  
	  \dfrac{\partial \bar \theta^0}{\partial \xi}  =0, \quad \text{and} \quad  
	  \dfrac{\partial^2 \bar \theta^0}{\partial \xi^2}  = 0,   \text{  at  } t=0.
\label{e:L_t0_2}	  	  
\ee
We deduce, from $\eqref{eq_cor}_1$ and \eqref{e:cut_cor}, that
\be
	  \f{\partial \bar \theta^0}{\partial t} 
	  = \delta(\xi) \f{\partial \theta^0}{\partial t}
	  = \delta(\xi) \e \f{\partial^2 \theta^0}{\partial \xi^2}
	  = 0, \text{  at  } t= 0.\
\label{e:L_t0_3}	  	  
\ee
From \eqref{e:L_t0_1}, \eqref{e:L_t0_2}, and \eqref{e:L_t0_3} we obtain
\be
	  L_\e(\bar \theta^0) = 0, \text{  at  } t=0.
\ee
Hence we arrive at:
\be
	  \f{\partial w^0_\e}{\partial t} = \e \Delta u^0, \text{  at  } t=0.
\ee
\end{remark}
We take the scalar product of $\eqref{e:2nd_t_deri}_1$ in the space $L^2(D)$ with $\f{\partial w^0_\e}{\partial t}$:
\be
\begin{split}
	  & \f{1}{2} \f{d}{dt} \Big\| \f{\partial w^0_\e}{\partial t} \Big \|^2_{L^2(D)} 
	  + \e \Big \| \nabla \Big ( \f{\partial w^0_\e}{\partial t} \Big ) \Big \|^2_{L^2(D)}  \\
	  & = 
	  \e \Big ( \Delta \Big ( \f{\partial u^0}{\partial t} \Big ),\f{\partial w^0_\e}{\partial t} \Big )
	  - \Big (L_\e \Big ( \f{\partial \bar \theta^0}{\partial t} \Big ),\f{\partial w^0_\e}{\partial t} \Big ) \\
	  & \leq
	  \kappa \e^2 
	  + \f{1}{4} \Big \| \f{\partial w^0_\e}{\partial t} \Big \|^2_{L^2(D)}
	  + \Big \| L_\e \Big( \f{\partial \bar \theta^0}{\partial t} \Big) \Big \|^2_{L^2(D)}
	  + \f{1}{4} \Big \| \f{\partial w^0_\e}{\partial t} \Big \|^2_{L^2(D)}.
\end{split}
\label{e:2nd_last}
\ee
Using the similar argument as in \eqref{eq_split}
\be
\begin{split}
	  \Big \| L_\e \Big( \f{\partial \bar \theta^0}{\partial t} \Big ) \Big \|^2_{L^2(D)}
	  & \leq 
	  \Big \| L_\e \Big( \f{\partial \bar \theta^0}{\partial t} - \f{\partial \theta^0}{\partial t} \Big ) \Big \|^2_{L^2(D_{\f{1}{2}})}
	  + \Big \| L_\e \Big( \f{\partial \theta^0}{\partial t} \Big ) \Big \|^2_{L^2(D_{\f{1}{2}})} \\
	  & \leq \text{(by Lemma \ref{lem3})}\\
	  & \leq e.s.t. + + \Big \| L_\e \Big( \f{\partial \theta^0}{\partial t} \Big ) \Big \|^2_{L^2(D_{\f{1}{2}})}.
\end{split}
\ee
Noting that
\be
\begin{split}
	  \Big \| L_\e \Big( \f{\partial \theta^0}{\partial t} \Big ) \Big \|^2_{L^2(D_{\f{1}{2}})}
	  & \leq
	  \kappa \Big \| \e \f{\partial^3 \theta^0}{\partial t \eta^2} \Big \|^2_{L^2(D_{\f{1}{2}})}
	  + \kappa \Big \| \e \f{\partial^2 \theta^0}{\partial t \xi} \Big \|^2_{L^2(D_{\f{1}{2}})} \\
	  & \leq
	  \kappa \Big \| \e^2 \f{\partial^4 \theta^0}{\partial \xi^2 \eta^2} \Big \|^2_{L^2(D_{\f{1}{2}})}
	  + \kappa \Big \| \e^2 \f{\partial^3 \theta^0}{\partial \xi^3} \Big \|^2_{L^2(D_{\f{1}{2}})} \\	  
	  & \leq \text{(by Lemma \ref{lem_pwbd3})} \\
	  & \leq \kappa \e^{\f{3}{2}},
\end{split}
\ee
then \eqref{e:2nd_last} becomes
\be
	  \f{1}{2} \f{d}{dt} \Big\| \f{\partial w^0_\e}{\partial t} \Big \|^2_{L^2(D)} 
	  + \e \Big \| \nabla \Big ( \f{\partial w^0_\e}{\partial t} \Big ) \Big \|^2_{L^2(D)} 
	  \leq \kappa \e^{\f{3}{2}} + \f{1}{2} \Big\| \f{\partial w^0_\e}{\partial t} \Big \|^2_{L^2(D)}.
\ee
Thanks to the Gronwall inequality, we obtain
\be
	  \Big\| \f{\partial w^0_\e}{\partial t} \Big \|_{L^2(D)} \leq \kappa \e^{\f{3}{4}}.
\label{e:w_deri_est}	  
\ee
From $\eqref{eq_w0}_1$, we deduce that
\be
\begin{split}
	  \|w^0_\e \|_{L^{\infty}(0,T;H^2(D))}
	  & \leq
	  \f{1}{\e} \Big \|\f{\partial w^0_\e}{\partial t} \Big \|_{L^{\infty}(0,T;L^2(D))}
	  + \| \Delta u^0\|_{L^{\infty}(0,T;L^2(D))}
	  + \f{1}{\e}  \| L_\e (\bar \theta^0 )   \|_{L^{\infty}(0,T;L^2(D))} \\
	  & \leq \text{(by \eqref{e:w_deri_est})} \\
	  & \leq  
	  \kappa \e^{-\f{1}{4}} 
	  + \kappa 
	  + \f{\kappa}{\e}  \| L_\e  (\bar \theta^0 - \theta^0 )   \|_{L^{\infty}(0,T;L^2(D))}
	  + \f{\kappa}{\e}  \| L_\e  (\theta^0 )   \|_{L^{\infty}(0,T;L^2(D))} \\
	  & \leq \text{(by \eqref{eq_L_bound})} \\
	  & \leq \kappa \e^{-\f{1}{4}}.
\end{split}
\ee
Hence, we finally obtain
\be
	  \| w^0_\e \|_{L^{\infty}(0,T;H^2(D))} \leq \kappa \e^{-\f{1}{4}}.
\ee
We then arrive at the following conclusion.
\begin{theorem}
 Let $u^\e$ be the solution of \eqref{eq_main} and $u^0$ be the solution of \eqref{eq_lim}. Then, the following estimates hold:
 \be
 \begin{split}
	  & \|u^\e - u^0 - \bar \theta^0 \|_{L^{\infty}(0,T;L^2(D))} \leq \kappa \e^{\f{3}{4}},\\
	  %& \|u^\e - u^0 - \bar \theta^0 \|_{L^{2}(0,T;H^1(D))} \leq \kappa \e^{\f{1}{4}},\\
	  & \|u^\e - u^0 - \bar \theta^0 \|_{L^{\infty}(0,T;H^1(D))} \leq \kappa \e^{\f{1}{4}},\\
	  %& \|u^\e - u^0 - \bar \theta^0 \|_{L^{2}(0,T;H^2(D))} \leq \kappa \e^{-\f{1}{4}},\\
	  & \|u^\e - u^0 - \bar \theta^0 \|_{L^{\infty}(0,T;H^2(D))} \leq \kappa \e^{-\f{1}{4}},
 \end{split}
 \label{eq_thm1}
 \ee
 where $\bar \theta^0$ is the corrector in \eqref{e:cut_cor} and $\kappa$ is a constant independent of $\epsilon$.
 \label{main_thm1}
\end{theorem}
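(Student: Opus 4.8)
The plan is to collect the three bounds from the energy estimates built up above, so the proof of Theorem~\ref{main_thm1} is essentially a matter of organizing those computations. The common thread is the error equation \eqref{eq_w0} for $w^0_\e = u^\e - u^0 - \bar\theta^0$, whose source term is $\e\Delta u^0 - L_\e(\bar\theta^0)$; the only delicate ingredient is a good bound on $\|L_\e(\bar\theta^0)\|$ in the relevant norm. For this one always writes $L_\e(\bar\theta^0)=L_\e(\bar\theta^0-\theta^0)+L_\e(\theta^0)$, discards the first piece as an e.s.t. by Lemma~\ref{lem3}, and uses $\eqref{eq_cor}_1$ to reduce $L_\e(\theta^0)$ to $-\frac{\e}{(1-\xi)^2}\partial_\eta^2\theta^0+\frac{\e}{1-\xi}\partial_\xi\theta^0$, which Lemma~\ref{lem_pwbd3} bounds by $\kappa\e^{3/4}$ in $L^2(D_{1/2})$ (with the matching squared bound $\kappa\e^{3/2}$ for the analogous expression built from $\partial_t\theta^0$, again via $\eqref{eq_cor}_1$ and Lemma~\ref{lem_pwbd3}).

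First, for the $L^\infty(0,T;L^2)$ bound I would multiply $\eqref{eq_w0}_1$ by $w^0_\e$, integrate over $D$, and use Cauchy--Schwarz and Young together with $\|\Delta u^0\|_{L^2(D)}\le\kappa$ and $\|L_\e(\bar\theta^0)\|_{L^2(D)}\le\kappa\e^{3/4}$, obtaining a differential inequality of the type $\frac{d}{dt}\|w^0_\e\|_{L^2}^2\le 2\|w^0_\e\|_{L^2}^2+\kappa\e^{3/2}$; since $w^0_\e(0)=0$, Gronwall yields the first estimate. Second, for $L^\infty(0,T;H^1)$ I would instead take the scalar product of $\eqref{eq_w0}_1$ with $-\Delta w^0_\e$, absorb the $\|\Delta w^0_\e\|_{L^2}$ terms on the left, and reach $\frac{d}{dt}\|\nabla w^0_\e\|_{L^2}^2\le(\text{lower order})+\kappa\e^{1/2}$; Gronwall (again $w^0_\e(0)=0$) gives the second estimate, and integrating the same inequality in $t$ provides the auxiliary bound $\|w^0_\e\|_{L^2(0,T;H^2)}\le\kappa\e^{-1/4}$.

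The main obstacle is the $L^\infty(0,T;H^2)$ estimate: differentiating \eqref{eq_w0} twice more in space is not viable, since the high-order $\xi$-derivatives of $\theta^0$ and the term $\Delta^2 u^0$ are too singular in $\e$. Instead I would differentiate \eqref{eq_w0} in time, being careful that the resulting initial datum is $\partial_t w^0_\e|_{t=0}=\e\Delta u^0$, not $0$ --- this uses the compatibility conditions \eqref{eq:comp} and \eqref{e:comp_cond2}, which force $L_\e(\bar\theta^0)|_{t=0}=0$ exactly as in the Remark preceding the theorem. Taking the scalar product of the time-differentiated equation with $\partial_t w^0_\e$, and bounding $\|L_\e(\partial_t\bar\theta^0)\|_{L^2(D)}^2$ by trading each $\partial_t$ on $\theta^0$ for $\e\partial_\xi^2$ via $\eqref{eq_cor}_1$ and invoking Lemma~\ref{lem_pwbd3} (which reaches four $\xi$-derivatives), produces $\frac{d}{dt}\|\partial_t w^0_\e\|_{L^2}^2\le\|\partial_t w^0_\e\|_{L^2}^2+\kappa\e^{3/2}$; Gronwall with the $O(\e)$ initial datum gives $\|\partial_t w^0_\e\|_{L^\infty(0,T;L^2)}\le\kappa\e^{3/4}$. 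Finally I would read $\Delta w^0_\e$ off the PDE, $\e\Delta w^0_\e=\partial_t w^0_\e-\e\Delta u^0+L_\e(\bar\theta^0)$, so that elliptic regularity gives $\|w^0_\e\|_{H^2(D)}\le\frac1\e\|\partial_t w^0_\e\|_{L^2}+\|\Delta u^0\|_{L^2}+\frac1\e\|L_\e(\bar\theta^0)\|_{L^2}\le\kappa\e^{-1/4}+\kappa+\kappa\e^{-1/4}$, the third estimate. The subtle points throughout are the $\e$-power bookkeeping --- in particular that time-differentiation genuinely exchanges a factor $\e^{-1}$ for a factor regained from $\partial_t\theta^0=\e\partial_\xi^2\theta^0$ --- and the verification of the nonzero initial condition for $\partial_t w^0_\e$.
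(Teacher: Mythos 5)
Your proposal is correct and follows essentially the same route as the paper: the $L^2$ energy estimate with the splitting $L_\e(\bar\theta^0)=L_\e(\bar\theta^0-\theta^0)+L_\e(\theta^0)$ and Lemmas~\ref{lem3} and~\ref{lem_pwbd3}, the scalar product with $-\Delta w^0_\e$ for the $H^1$ bound, and the time-differentiated equation with initial datum $\partial_t w^0_\e|_{t=0}=\e\Delta u^0$ followed by reading $\Delta w^0_\e$ off the PDE for the $H^2$ bound. You also correctly identify the two delicate points the paper relies on, namely the compatibility conditions forcing $L_\e(\bar\theta^0)|_{t=0}=0$ and the exchange $\partial_t\theta^0=\e\partial_\xi^2\theta^0$ from $\eqref{eq_cor}_1$.
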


\section{\bf Approximation via finite elements} \label{sec_fem}
In this section, we show how to approximate \eqref{eq_main} using P1 finite elements with or without an enriched space.
We first introduce the classical finite elements spaces $V_N$ and then define the new finite element spaces $\big ( \bar V_{Nt} \big )_{t \in [0,T]}$ enriched with the boundary layer elements and the corresponding approximations. Then, we develop the convergence analysis for the new scheme.

\subsection{Finite element spaces}
We define the standard finite element spaces $V_N$ such that
\be
	  V_N := \Big \{ \sum^N_{i=1} c_i \varphi_i(x,y)  \Big \} \subset H^1_0(\Omega),
\ee
where the $\varphi_i(x,y)$ is the classical P1 functions equal to $1$ at some node $M_i$ and to $0$ at all other nodes with $1 \leq i \leq N$.
We then introduce the new finite element space supplemented with the boundary layer elements:
\be
	\big ( \bar V_{Nt} \big )_{t \in [0,T]} := \Big \{ \sum^N_{i=1} c_i(t) \varphi_i(x,y) + \sum^M_{j=1} d_j(t) \varphi_0(\xi,t) \psi_j(\eta)     \Big \},
\ee
where the $\psi_j(\eta)$ are the classical P1 elements in 1D space, i.e. the hat functions, for $1 \leq j \leq M$. Here $\varphi_0 = \varphi_0(\xi,t)$ is the boundary layer element, that is,
\be
	    \varphi_0(\xi,t) = \Big (1- \int^t_0   I(\xi,t-s) ds \Big ) \delta(\xi),
\label{bl_element}	    
\ee
where $I=I(\xi,t)$ is the function in $\eqref{eq_err_fuc}_1$ and $\delta = \delta(\xi)$ is a smooth cut-off function as in Section \ref{sec2}.\\
% \begin{remark}
%  In Section \ref{sec3.2}, the term $t^{-1}$ of \eqref{bl_element} makes trouble as $t \longrightarrow 0$. However, we can give the reasonable assumption $t > \Delta t$ where $\Delta t$ is the time step for the time discretization and usually we set $\Delta t = 10^{-2}$.
% \end{remark}
We aim to study the classical and new approximation solutions $u^{\e}_N \in V_N$ and $\bar u^{\e}_N \in \big ( \bar V_{Nt} \big )_{t \in [0,T]}$, respectively, such that
\be
\begin{split}
	  & u^\e_N: [0,T] \longrightarrow V_N,\\
	  & (\pa_t u^{\e}_N,v) +   (\nabla u^{\e}_N,\nabla v) = (f,v), \quad \forall v \in V_N, \text{  } t \in (0,T],\\
	  & (u^\e_N(0),v) = (u_0,v), \quad \forall v \in V_N,
\end{split}
\label{eq_var_form1}	  
\ee
and
\be
\begin{split}
	  & \bar u^\e_N: [0,T] \longrightarrow \big ( \bar V_{Nt} \big )_{t \in [0,T]},\\
	  & (\pa_t \bar u^{\e}_N,v) +   (\nabla \bar u^{\e}_N, \nabla v) = (f,v), \quad \forall v \in \big ( \bar V_{Nt} \big )_{t \in [0,T]}, \text{  } t \in (0,T],\\
	  & (\bar u^\e_N(0),v) = (u_0,v), \quad \forall v \in \big ( \bar V_{Nt} \big )_{t=0}.
\end{split}
\label{eq_var_form2}
\ee

\subsection{Convergence analysis} \label{sec3.2}
In this section, we study the convergence analysis for the finite element approximation using the results in Section \ref{sec2}. Thanks to $\eqref{eq_thm1}_5$ and the regularity assumption 
\be
\| u^0 \|_{L^{\infty}(0,T;H^2(D))} \leq \kappa,
\ee
where $u^0 = u^0(\eta,0,t)$, we obtain 
\be
	    \| u^\e - \bar  \theta^0 \|_{L^{\infty}(0,T;H^2(D))} \leq \kappa \e^{-\f{1}{4}}.
\label{eq_conv_case1}	    
\ee
Setting $g(\eta,t) = u^0(\eta,0,t)$ and $I = I(\xi,t-s)$, we then have:
\be
\begin{split}
	  & \| u^\e - g \varphi_0 \|_{L^{\infty}(0,T;H^2(D))} \\
	  & = \| u^\e - \bar \theta^0 + \bar \theta^0 - g \varphi_0 \|_{L^{\infty}(0,T;H^2(D))} \\
	  & \leq \| u^\e - \bar \theta^0 \|_{L^{\infty}(0,T;H^2(D))} 
		    + \| \bar \theta^0 - g \varphi_0 \|_{L^{\infty}(0,T;H^2(D))}\\
	  & \leq \kappa \e^{-\f{1}{4}} 
		+ \Big \| - \Big (\int^t_0 I \f{\pa u^0}{\pa t}(\eta,0,s) ds \Big ) \delta
			      -  g \Big (1 - \int^t_0 I ds \Big ) \delta 
			      \Big \|_{L^{\infty}(0,T;H^2(D_{\f{1}{2}}))} \\
	  & \leq \kappa \e^{-\f{1}{4}} 
		 + \Big \| \int^t_0 I \Big ( g - \f{\pa u^0}{\pa t}(\eta,0,s)\Big ) ds 
			  \Big \|_{L^{\infty}(0,T;H^2(D_{\f{1}{2}}))} + \kappa \| g \delta \|_{L^{\infty}(0,T;H^2(D_{\f{1}{2}}))}\\
	  & \leq \kappa \e^{-\f{1}{4}} + \mathcal{R},
\end{split}
\label{eq_conv_case2}
\ee
where 
$$\mathcal{R} = \Big \| \int^t_0 I \Big (g - \f{\pa u^0}{\pa t}(\eta,0,s) \Big ) ds  \Big \|_{L^{\infty}(0,T;H^2(D_{\f{1}{2}}))}.$$
To estimate the term $\mathcal{R}$, we consider only the dominating term which is the second derivative in $\xi$, i.e. $\f{\pa^2}{\pa \xi^2}$. We note that
\be
	  \int^t_0 I \Big(g - \f{\partial u^0}{\partial t}(\eta,0,s) \Big) ds
	  = g \int^t_0 I ds + \theta^0;
\ee
hence we deduce that
\be
\begin{split}
	  \mathcal{R} 
	  & \leq \| \theta^0 \|_{L^{\infty}(0,T;H^2_{\xi}(D_{\f{1}{2}})) }
	  + \Big \| g \int^t_0 I ds \Big \|_{L^{\infty}(0,T;H^2_{\xi}(D_{\f{1}{2}}))} \\
	  & \leq \kappa \e^{-\f{3}{4}} + \kappa \Big \|\int^t_0 I ds \Big \|_{L^{\infty}(0,T;H^2_{\xi}(D_{\f{1}{2}}))}.
\end{split}	  
\ee
We apply Lemma 2.1 in \cite{JPT11}, which is the generalized version of Lemman \ref{lem_pwbd3} in this article, we then obtain
\be
	 \Big \|\int^t_0 I ds \Big \|_{L^{\infty}(0,T;H^2_{\xi}(D_{\f{1}{2}}))} \leq \kappa \e^{-\f{3}{4}}.
\ee
Hence, we obtain
\be
	  \mathcal{R} \leq \kappa \e^{-\f{3}{4}}.
\label{eq_conv_case3}	  
\ee
Hence, from \eqref{eq_conv_case1}, \eqref{eq_conv_case2}, and \eqref{eq_conv_case3}, we find
\be
	    \| u^\e - g \varphi_0 \|_{L^{\infty}(0,T;H^2(D))} \leq \kappa \e^{-\f{3}{4}}.
\label{eq_bdd_0}	    
\ee
For further analysis, we now prove the following interpolation lemmas.
\begin{lemma} \label{lem3.0}
\leavevmode
  Let $h$ be the one-dimensional mesh size.
\begin{enumerate}
  \item 
  Assume that $\gamma \in H^l(0,2 \pi)$ for $l=1,2$. Then, there exist $c_i \in \mathbb{R}$, $i=1,...,N_1$, such that for $m=0$ if $l=1$ and $m=0,1$ if $l=2$
  \be
	      \Big \|\gamma - \sum^{N_1}_{i=1} c_i \psi_i \Big \|_{H^m(0,2 \pi)} \leq \kappa h^{l-m}\| \gamma \|_{H^l(0,2\pi)}.
  \ee
 
  \item 
  Assume that $\gamma \in C([0,T];H^l)$. Then, there exist $c_i=c_i(t) \in C([0,T])$, $i=1,...,N_1$, such that for $m=0$ if $l=1$ and $m=0,1$ if $l=2$
  \be
	      \Big \|\gamma(t) - \sum^{N_1}_{i=1} c_i(t) \psi_i \Big \|_{H^m(0,2 \pi)} \leq \kappa h^{l-m} \sup_{t \in [0,T]} \| \gamma(t) \|_{H^l(0,2\pi)}.
  \ee
\end{enumerate}
\end{lemma}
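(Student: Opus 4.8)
The plan is to take the nodal (Lagrange) interpolation operator $\Pi_h$ associated with the hat functions $\psi_i$, prove (1) by the classical scaling-plus-Bramble--Hilbert argument, and then deduce (2) by applying (1) pointwise in $t$. First I would check that the interpolant is well defined: in one space dimension $H^l(0,2\pi)\hookrightarrow C([0,2\pi])$ for $l\ge 1$, so the values of $\gamma$ at the mesh nodes $\eta_i$ make sense, and I would simply set $c_i=\gamma(\eta_i)$ (read periodically in $\eta$, since $\eta$ is the polar angle) and $\Pi_h\gamma=\sum_{i=1}^{N_1}c_i\psi_i$, the continuous piecewise-linear interpolant of $\gamma$.

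\emph{Local estimate and scaling.} On the reference interval $\hat K=(0,1)$ the error operator $I-\hat\Pi$ (identity minus linear interpolation at the two endpoints) is bounded $H^l(\hat K)\to H^m(\hat K)$ and annihilates affine functions, so the Bramble--Hilbert lemma gives a constant $c=c(l,m)$ with $\|\hat v-\hat\Pi\hat v\|_{H^m(\hat K)}\le c\,|\hat v|_{H^l(\hat K)}$ for $m=0$ when $l=1$ and $m=0,1$ when $l=2$ (in 1D one may instead obtain this directly from the integral remainder of the interpolation error, avoiding Bramble--Hilbert). On a generic subinterval $K_i$ of length $h$, the affine change of variables $\hat x\mapsto\eta_i+h\hat x$ gives $|w|_{H^k(K_i)}=h^{1/2-k}|\hat w|_{H^k(\hat K)}$; since interpolation commutes with this map, combining the two identities applied to $w=\gamma-\Pi_h\gamma$ and to the pullback of $\gamma|_{K_i}$ yields $|\gamma-\Pi_h\gamma|_{H^m(K_i)}\le c\,h^{l-m}|\gamma|_{H^l(K_i)}$.

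\emph{Global estimate and the time-dependent case.} Summing the squares over the subintervals $K_i$, which tile $(0,2\pi)$, and using that $h$ is bounded so that the lower-order seminorms of the error are absorbed into the factor $h^{l-m}$, gives $\|\gamma-\Pi_h\gamma\|_{H^m(0,2\pi)}\le\kappa\,h^{l-m}\|\gamma\|_{H^l(0,2\pi)}$, which is (1). For (2) I would apply (1) at each fixed $t$ with $c_i(t)=\gamma(\eta_i,t)$; continuity of $t\mapsto\gamma(\cdot,t)$ in $H^l\hookrightarrow C([0,2\pi])$ makes each $c_i$ continuous on $[0,T]$, and since the constant $\kappa$ in (1) is independent of $t$, taking the supremum over $t\in[0,T]$ finishes the proof. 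The argument is standard; the only points requiring a little care are verifying that the nodal values are legitimate (Sobolev embedding in 1D) and the bookkeeping between seminorms and full norms when summing the local estimates, so I do not expect a genuine obstacle here — this lemma is purely a technical ingredient for the convergence analysis that follows.
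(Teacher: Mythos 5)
Your proof is correct and follows essentially the same route as the paper: part (1) is the classical nodal-interpolation estimate (which the paper simply cites from Ciarlet, while you fill in the scaling/Bramble--Hilbert details), and part (2) is obtained exactly as in the paper by applying the estimate pointwise in $t$ with $c_i(t)=\gamma(\eta_i,t)$ and using the embedding $H^l(0,2\pi)\hookrightarrow C([0,2\pi])$ to get continuity of the coefficients in time, with a $t$-independent constant.
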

\begin{proof}
\leavevmode
 \begin{enumerate}
  \item 
  The result is classical and the proof can be found, e.g., in \cite{CP02}.
  \item 
  Now the $c_i$ depend on $t$, $c_i=c_i(t)$ and we need to analyze the dependence in $t$ of the $c_i$.
  A perusal of the proof in \cite{CP02} shows that the result hinges on the regularity in time of the interpolation mappings $\Pi_K$:
  \be
	    \gamma 
	    = \gamma(\cdot,t) \longrightarrow (\Pi_K \gamma) \Big ( \f{i}{N_1},t \Big ) 
	    = \gamma \Big (\f{i}{N_1},t \Big ), 
	    \quad i=0,...,N_1,
  \ee
  which map $H^1([0, 2 \pi])$ into $C([0, 2 \pi])$ and $H^{r+1}([0, 2 \pi])$ into $C^r([0, 2 \pi])$ for $r \geq 0$.
  Hence the continuity in time of the interpolants in $H^r$ follows when $\gamma \in C([0,T];H^r([0, 2 \pi]))$.
 \end{enumerate}
\end{proof}

\begin{lemma} \label{lem3.1}
\leavevmode
Let $h$ be the two-dimensional mesh size, that is the maximum diameter of the triangular elements.
 \begin{enumerate}
  \item  
	  Assume that $w \in H^2(D)$. Then, there exist $c_j \in \mathbb{R}$, $j=1,...,N_2$, such that
	  \be
	      \Big \| w - \sum^{N_2}_{j=1} c_j \varphi_j \Big \|_{H^m(D)} 
	      \leq \kappa h^{2-m} \big \| w \big\|_{H^2(D)}.
	  \ee
  \item
	  Assume that $w \in C([0,T];H^2(D))$. Then, there exist $c_j = c_j(t) \in C([0,T])$, $j=1,...,N_2$, such that
	  \be
	      \Big \| w(t) - \sum^{N_2}_{j=1} c_j(t) \varphi_j \Big \|_{H^m(D)} 
	      \leq \kappa h^{2-m} \sup_{t \in [0,T]} \big \| w(t) \big\|_{H^2(D)}.
	  \ee
 \end{enumerate}
\end{lemma}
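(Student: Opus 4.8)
The plan is to derive both parts from the classical nodal interpolation estimate for $P1$ elements on a shape-regular (quasi-uniform) triangulation of $D$, in the same spirit as the proof of Lemma \ref{lem3.0}.

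For part (1), I would introduce the $P1$ nodal interpolation operator $\Pi_h$ associated with the triangulation, so that $\Pi_h w = \sum_{j=1}^{N_2} w(M_j)\,\varphi_j$; this is well defined on $H^2(D)$ because $H^2(D)\hookrightarrow C(\bar D)$ in two space dimensions, so the nodal values $w(M_j)$ make sense. The estimate $\big\| w - \Pi_h w \big\|_{H^m(D)} \le \kappa\, h^{2-m} \| w \|_{H^2(D)}$ for $m=0,1$ then follows from the standard reference-element argument: map each triangle $K$ onto the reference triangle $\hat K$ by an affine bijection, invoke the Bramble--Hilbert lemma on $\hat K$ to bound the reference interpolation error $\| \hat w - \widehat{\Pi_h w}\|_{H^m(\hat K)}$ in terms of the $H^2$-seminorm of $\hat w$, transfer the estimate back to $K$ using the bounds on the affine map and its inverse provided by the quasi-uniformity of the mesh, and finally sum the local contributions over all $K$. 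Choosing $c_j = w(M_j)$ yields the assertion; the computation is classical and is carried out, e.g., in \cite{CP02}.

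For part (2), the point is simply that the coefficients furnished by this construction are the nodal values $c_j(t) = w(M_j,t)$. Since $w \in C([0,T];H^2(D))$ and the embedding $H^2(D)\hookrightarrow C(\bar D)$ is continuous, the map $t \mapsto w(\cdot,t)$ is continuous from $[0,T]$ into $C(\bar D)$, hence each evaluation $t \mapsto w(M_j,t)$ lies in $C([0,T])$. Applying part (1) to $w(\cdot,t)$ at each fixed $t$ and taking the supremum over $t \in [0,T]$ gives the claimed bound.

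I do not expect a genuine obstacle here: part (1) is textbook material, and the only delicate point in part (2) --- continuity in time of the interpolant --- is handled, exactly as in the proof of Lemma \ref{lem3.0}(2), by the continuity of the nodal-evaluation functionals on $C(\bar D)$ combined with the Sobolev embedding $H^2(D)\hookrightarrow C(\bar D)$.
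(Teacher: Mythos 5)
Your proposal is correct and takes essentially the same route as the paper: part (1) is the classical interpolation estimate delegated to \cite{CP02} (you merely spell out the reference-element/Bramble--Hilbert argument), and part (2) rests on the continuity in time of the nodal values $c_j(t)=w(M_j,t)$ via the embedding $H^2(D)\hookrightarrow C(\bar D)$, which is exactly the paper's observation that the interpolation mapping $\Pi_K$ maps $H^2(D)$ into $C(D)$.
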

\begin{proof}
 \leavevmode
 \begin{enumerate}
  \item 
  The result is classical and the proof can be found, e.g., in \cite{CP02}.
  \item 
  We use the same method as in Lemma \ref{lem3.0}. Considering the regularity of the interpolation mapping $\Pi_K$, we write
  \be
	    w = w(\cdot,t) \longrightarrow (\Pi_K w) (A_j,t ) = w(A_j,t),
  \ee
  which map $H^2(D)$ into $C(D)$, where the $A_j$ are the nodal points of the P1 elements.
  Hence the continuity in time of the interpolants in $H^2(D)$ follows when $w \in C([0,T];H^2(D))$.
 \end{enumerate}
\end{proof}

\begin{remark}
 The interpolation results in (1) of Lemma \ref{lem3.0} and \ref{lem3.1} are standard and the proofs have been presented in many other places; see e.g. \cite{BS94} and \cite{SF73}. However, in the case (2) of Lemmas \ref{lem3.0} and \ref{lem3.1}, we could not find the specific proofs in any earlier works although the results has been referred to in the literatures. Hence the proofs in Lemmas \ref{lem3.0} and \ref{lem3.1} are useful in this article and for the future works.
\end{remark}

\begin{lemma} \label{interpol_lem2}
 There exist $c_i=c_i(t) \in C([0,T])$ and $d_j=d_j(t) \in C([0,T])$, $i=1,...,N$, $j=1,...,M$, such that
 \be
 \begin{split}
	   & \Big \| u^\e - \sum^N_{i=1} c_i\varphi_i - \sum^M_{j=1}d_j \varphi_0 \psi_j	\Big \|_{L^{\infty}(0,T;L^2(D))} \leq \kappa h^2 \e^{-\f{3}{4}},\\
	   & \Big \| u^\e - \sum^N_{i=1} c_i\varphi_i - \sum^M_{j=1}d_j \varphi_0 \psi_j	\Big \|_{L^{\infty}(0,T;H^1(D))} \leq \kappa h \e^{-\f{3}{4}}.
 \end{split}
 \label{eq_H2_interpol}
 \ee
\end{lemma}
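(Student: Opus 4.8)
The plan is to combine the known $H^2$-approximation bound for $u^\e$ by the single enriched function $g\varphi_0$ with the two interpolation lemmas just established (Lemma~\ref{lem3.0}(2) for the tangential direction and Lemma~\ref{lem3.1}(2) for the triangular mesh). The idea is a splitting: write
\[
u^\e \;=\; \bigl(u^\e - g\varphi_0\bigr) \;+\; g\varphi_0,
\]
approximate the first, ``smooth-in-$D$'' piece $u^\e - g\varphi_0$ by a combination $\sum_i c_i\varphi_i$ of the standard P1 functions using Lemma~\ref{lem3.1}(2), and approximate the factor $g(\eta,t)$ in the second, ``boundary-layer'' piece $g\varphi_0$ by a combination $\sum_j d_j(t)\psi_j(\eta)$ of the one-dimensional hat functions using Lemma~\ref{lem3.0}(2), leaving $\varphi_0(\xi,t)$ untouched. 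Adding these two approximations produces exactly an element of $\bigl(\bar V_{Nt}\bigr)_{t\in[0,T]}$, and its distance to $u^\e$ is controlled by the two interpolation errors plus nothing else, since the decomposition is exact.

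First I would record that $u^\e - g\varphi_0 \in C([0,T];H^2(D))$ with $\|u^\e - g\varphi_0\|_{L^\infty(0,T;H^2(D))} \le \kappa\e^{-3/4}$, which is precisely \eqref{eq_bdd_0}; the requisite time-continuity comes from the regularity of $u^\e$, $u^0$, and the explicit form of $\varphi_0$ in \eqref{bl_element}. Applying Lemma~\ref{lem3.1}(2) to $w = u^\e - g\varphi_0$ gives $c_i = c_i(t) \in C([0,T])$ with
\[
\Bigl\| (u^\e - g\varphi_0) - \sum_{i=1}^{N} c_i\varphi_i \Bigr\|_{H^m(D)} \le \kappa\, h^{2-m}\, \e^{-3/4}, \qquad m=0,1 .
\]
Next I would treat $g\varphi_0 - \sum_j d_j\psi_j\varphi_0 = \bigl(g - \sum_j d_j\psi_j\bigr)\varphi_0$: applying Lemma~\ref{lem3.0}(2) to $\gamma = g = u^0(\cdot,0,t)$ (which lies in $C([0,T];H^2(0,2\pi))$ by the regularity assumption $\|u^0\|_{L^\infty(0,T;H^2(D))}\le\kappa$) yields $d_j=d_j(t)\in C([0,T])$ with $\|g - \sum_j d_j\psi_j\|_{H^m(0,2\pi)} \le \kappa h^{2-m}\|g\|_{H^2}$; then one multiplies by $\varphi_0$ and estimates $\|(g-\sum_j d_j\psi_j)\varphi_0\|_{H^m(D_{1/2})}$ in terms of $\|g-\sum_j d_j\psi_j\|_{H^m(0,2\pi)}$ and the $\xi$-norms of $\varphi_0$, using $\|\varphi_0\|_{L^\infty(0,T;H^2_\xi(D_{1/2}))}\le\kappa\e^{-3/4}$, which follows from Lemma~2.1 in \cite{JPT11} exactly as in the derivation of \eqref{eq_conv_case3}. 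Combining the two pieces by the triangle inequality gives the stated bounds with $m=0$ and $m=1$.

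The main obstacle I anticipate is the second step: bounding $\|(g-\sum_j d_j\psi_j)\,\varphi_0\|_{H^m(D)}$ by a product of a one-dimensional interpolation error in $\eta$ and an $\e$-dependent norm of $\varphi_0$ in $\xi$, because $H^2(D)$ does not split as a tensor product and the Leibniz rule mixes $\eta$- and $\xi$-derivatives. One must carefully distribute derivatives: the pure $\xi^2$-derivative hits $\varphi_0$ and is multiplied by the $L^2_\eta$-norm of $g-\sum_j d_j\psi_j$ (no $\eta$-derivative, so $h^2$), while the $\eta^2$-derivative hits $g-\sum_j d_j\psi_j$ (giving only $h^{0}$) but is then multiplied by the bounded $L^2_\xi$-norm of $\varphi_0$ (which is $O(1)$, not $O(\e^{-3/4})$, since it involves no $\xi$-derivative), and the mixed term splits a derivative onto each factor. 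In every case the worst constant is $\kappa h^{0}\e^{-3/4}$ or better for $m=2$; but only $m=0,1$ are claimed here, so for $m\le 1$ one checks that each term carries at least one factor of $h$ beyond what is needed, i.e. $h^{2-m}$, while $\e^{-3/4}$ is the uniform price of the layer. Since $\varphi_0$ is supported in $D_{1/2}$, all these integrals reduce to $D_{1/2}$, matching the norms appearing in \eqref{eq_conv_case2}--\eqref{eq_conv_case3}. The remaining arithmetic — verifying that combining $\kappa h^{2-m}\e^{-3/4}$ from Lemma~\ref{lem3.1}(2) with the boundary-layer estimate gives exactly \eqref{eq_H2_interpol} — is routine.
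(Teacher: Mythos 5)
Your proposal follows exactly the paper's own argument: the same splitting $u^\e = (u^\e - g\varphi_0) + g\varphi_0$, with Lemma \ref{lem3.1}(2) applied to the first piece using the bound \eqref{eq_bdd_0}, and Lemma \ref{lem3.0}(2) applied to $g$ in the second piece with the factor $\varphi_0$ controlled in $H^2_\xi$ by $\kappa\e^{-3/4}$. Your discussion of how to distribute the mixed $\eta$- and $\xi$-derivatives in the product term is in fact more careful than the paper, which passes over that point silently.
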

\begin{proof}
 Using \eqref{eq_bdd_0} and Lemma \ref{lem3.1}, for $m=0,1$, we deduce
 \be
 \begin{split}
	 \Big \|   u^\e - g \varphi_0 - \sum^N_{i=1} c_i \varphi_i	\Big \|_{L^{\infty}(0,T;H^m(D))} 
	 & \leq \kappa h^{2-m} \|u^\e - g \varphi_0\|_{L^{\infty}(0,T;H^2(D))} \\
	 & \leq \kappa h^{2-m} \e^{-\f{3}{4}}.
 \end{split}	 
 \ee
Moreover, by Lemmas \ref{lem_pwbd3} and \ref{lem3.0}, we also find
\be
  \begin{split}
	\Big \| g \varphi_0 - \sum^M_{j=1} d_j \varphi_0 \psi_j \Big \|_{L^{\infty}(0,T;H^m(D))} 
	& = \Big \| \varphi_0 (g - \sum^M_{j=1} d_j \psi_j )\Big \|_{L^{\infty}(0,T;H^m(D_{\f{1}{2}}))}\\
	& \leq \kappa h^{2-m}  \| \varphi_0  \|_{L^{\infty}(0,T;H^2_{\xi}(0,\f{1}{2}))}\\
	& \leq \kappa h^{2-m} \e^{-\f{3}{4}}.
  \end{split}
\ee
Then, Lemma \ref{interpol_lem2} follows.
\end{proof}
We now prove the main convergence theorem of Section \ref{sec_fem} concerning the new scheme. See Remark \ref{rem:3_2} below comparing to the result for the classical scheme.
\begin{theorem} \label{thm:main}
Let $u^\e$ and $\bar u_N^\e$ be the solutions of \eqref{eq_main_var_form} and \eqref{eq_var_form2}, respectively, then 
 \be
	    \| u^\e - \bar u^\e_N \|_{L^{\infty}(0,T;L^2(D))} \leq \kappa h^2 \e^{-\f{3}{4}} \Big( 1+ \log \f{T}{h^2} \Big ) + \kappa h \e^{-\f{1}{4}}.
 \label{eq_main_result}	    
 \ee
\end{theorem}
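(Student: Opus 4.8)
The plan is to carry out the Aubin--Nitsche (backward heat) duality argument of \cite{CTW00,NJ70}, adapted to the fact that the Galerkin space $(\bar V_{Nt})_{t\in[0,T]}$ moves with $t$. Fix $t\in(0,T]$ and split the error as $u^\e-\bar u^\e_N=\rho+\zeta$, where
\[
	\rho:=u^\e-\Pi_{ht}u^\e,\qquad \zeta:=\Pi_{ht}u^\e-\bar u^\e_N\in\bar V_{Nt},
\]
and $\Pi_{ht}u^\e\in\bar V_{Nt}$ is the near-best approximant constructed in Lemma \ref{interpol_lem2}, so that $\|\rho\|_{L^\infty(0,T;L^2(D))}\leq\kappa h^2\e^{-3/4}$ and $\|\nabla\rho\|_{L^\infty(0,T;L^2(D))}\leq\kappa h\e^{-3/4}$. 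The first of these already accounts for the $h^2\e^{-3/4}$ part of \eqref{eq_main_result}, so the real task is to estimate $\|\zeta\|_{L^\infty(0,T;L^2(D))}$.

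Subtracting the discrete scheme \eqref{eq_var_form2} from \eqref{eq_main_var_form} restricted to $v\in\bar V_{Nt}$ gives the Galerkin orthogonality $(\pa_t(u^\e-\bar u^\e_N),v)+\e(\nabla(u^\e-\bar u^\e_N),\nabla v)=0$ for all $v\in\bar V_{Nt}$, which holds in spite of the $t$-dependence of $\bar V_{Nt}$, and hence the equation for $\zeta$:
\[
	(\pa_t\zeta,v)+\e(\nabla\zeta,\nabla v)=-(\pa_t\rho,v)-\e(\nabla\rho,\nabla v),\qquad\forall v\in\bar V_{Nt}.
\]
Taking $v=\zeta$, integrating in time, using that $\zeta(0)$ is controlled by $\kappa h^2$ (since $g(\eta,0)=u^0(\eta,0,0)=0$ by the compatibility condition \eqref{eq:comp}, so the enriched part of $\Pi_{ht}u^\e$ vanishes at $t=0$), and invoking Gronwall's inequality yields the \emph{energy estimate}
\[
	\|\zeta\|_{L^\infty(0,T;L^2(D))}+\e^{1/2}\|\nabla\zeta\|_{L^2(0,T;L^2(D))}\leq\kappa\big(h^2+\e^{1/2}\|\nabla\rho\|_{L^2(0,T;L^2(D))}+\|\pa_t\rho\|_{L^2(0,T;L^2(D))}\big)\leq\kappa h\e^{-1/4},
\]
where the prefactor $\e^{1/2}$ in front of $\|\nabla\rho\|$ is exactly what turns the rate $h\e^{-3/4}$ into $h\e^{-1/4}$; this is the second term of \eqref{eq_main_result}. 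One also needs $\|\pa_t\rho\|_{L^2(0,T;L^2(D))}\leq\kappa h\e^{-1/4}$, which is obtained from Lemmas \ref{lem3} and \ref{lem_pwbd3} applied to the time-differentiated corrector, together with the control of the $t$-derivative falling on the moving basis function $\varphi_0(\xi,t)$ in $\Pi_{ht}u^\e$ (from \eqref{bl_element}--\eqref{eq_err_fuc} one gets a pointwise bound $|\pa_t\varphi_0|\leq\kappa\,\delta(\xi)\exp(-\xi^2/4\e t)$, which shows this extra contribution is no worse than the others).

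To obtain the sharp $h^2$-rate in the first term I would then run the dual problem: for the fixed $t$, let $\psi=\psi(\cdot,s)$, $0<s<t$, solve the backward heat equation $-\pa_s\psi-\e\Delta\psi=0$ in $D\times(0,t)$ with $\psi=0$ on $\pa D$ and $\psi(\cdot,t)=\zeta(\cdot,t)$; then $\|\psi(s)\|_{L^2(D)}\leq\|\zeta(t)\|_{L^2(D)}$, $\|\nabla\psi(s)\|_{L^2(D)}\leq\kappa(\e(t-s))^{-1/2}\|\zeta(t)\|_{L^2(D)}$ and $\|\psi(s)\|_{H^2(D)}\leq\kappa(\e(t-s))^{-1}\|\zeta(t)\|_{L^2(D)}$. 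Writing $\|\zeta(t)\|^2=(\zeta(t),\psi(t))$, integrating $\tfrac{d}{ds}(\zeta,\psi)$ over $(0,t)$, inserting $\Pi_{hs}\psi$ as the test function in the $\zeta$-equation and using $\pa_s\psi=-\e\Delta\psi$, one is reduced to terms of the form $\int_0^t\e(\nabla\rho,\nabla(\psi-\Pi_{hs}\psi))\,ds$ and $\int_0^t\e(\nabla\zeta,\nabla(\psi-\Pi_{hs}\psi))\,ds$ (plus endpoint contributions and terms generated by the $s$-dependence of $\Pi_{hs}$, all $O(h^2\e^{-3/4})$). Using $\|\nabla(\psi-\Pi_{hs}\psi)\|_{L^2}\leq\kappa\min\!\big(h\|\psi\|_{H^2},\|\nabla\psi\|\big)\leq\kappa\|\zeta(t)\|_{L^2}\min\!\big(\tfrac{h}{\e(t-s)},\tfrac1{\sqrt{\e(t-s)}}\big)$ and the elementary kernel estimate
\[
	\int_0^t\min\!\Big(\f{h}{\e(t-s)},\f1{\sqrt{\e(t-s)}}\Big)\,ds\leq\f{\kappa h}{\e}\Big(1+\log\f{T}{h^2}\Big),
\]
proved by splitting the integral at $t-s=h^2/\e$, the $\nabla\rho$-term is bounded by $\kappa h^2\e^{-3/4}\big(1+\log\tfrac{T}{h^2}\big)\|\zeta(t)\|_{L^2}$, while the $\nabla\zeta$-term is bounded (by Cauchy--Schwarz in $s$ and the energy bound on $\|\nabla\zeta\|_{L^2(0,T;L^2)}$) by $\kappa h\e^{-1/4}\|\zeta(t)\|_{L^2}$. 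Dividing through by $\|\zeta(t)\|_{L^2}$ and adding $\|\rho\|_{L^\infty(0,T;L^2(D))}\leq\kappa h^2\e^{-3/4}$ gives \eqref{eq_main_result}.

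I expect the principal obstacles to be: (i) the near-diagonal range $t-s\lesssim h^2/\e$ in the time integrals, where the interpolation estimate for $\psi$ is useless and one must instead use the $H^1$-stability of $\Pi_{hs}$ together with the smoothing bound for $\pa_s\psi$ (or an inverse inequality on $\bar V_{Ns}$) --- this is the delicate point that yields precisely the factor $1+\log(T/h^2)$, the logarithm itself originating from $\int(t-s)^{-1}\,ds$ on the complementary range $t-s\gtrsim h^2/\e$; and (ii) the systematic bookkeeping of all the terms caused by the basis function $\varphi_0(\xi,t)$ moving with $t$ (hidden inside $\pa_t\rho$ and inside $\pa_s\Pi_{hs}$), for which the pointwise estimates of Lemmas \ref{lem3} and \ref{lem_pwbd3}, and their $\pa_t$-differentiated versions, are exactly what is required.
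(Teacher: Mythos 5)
Your overall architecture is the right one --- the same interpolant $\hat u^\e_N=\sum c_i\varphi_i+\sum d_j\varphi_0\psi_j$ from Lemma \ref{interpol_lem2}, an Aubin--Nitsche duality argument, and a logarithm produced by splitting a time integral near the diagonal --- but two steps of your sketch would fail as written. First, your energy estimate for $\zeta$ requires $\|\pa_t\rho\|_{L^2(0,T;L^2(D))}\leq\kappa h\e^{-1/4}$ with $\rho=u^\e-\Pi_{ht}u^\e$. Lemma \ref{interpol_lem2} controls only $\rho$ and $\nabla\rho$; a bound on $\pa_t\rho$ would require an analogue of \eqref{eq_bdd_0} for $\pa_t u^\e-\pa_t(g\varphi_0)$, i.e.\ $H^2$-in-space control of $\pa_t w^0_\e$ and of the time derivatives of the interpolation coefficients $c_i(t),d_j(t)$, none of which is established (your pointwise bound on $\pa_t\varphi_0$ is correct but far from sufficient). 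Since your final bound on the $\nabla\zeta$-term leans on this energy estimate, the gap is load-bearing. Second, in the duality step with the \emph{continuous} backward dual $\psi$ tested against $\Pi_{hs}\psi$, the identity actually produces the additional terms $(\pa_s\zeta,\psi-\Pi_{hs}\psi)$ and $(\pa_s\rho,\Pi_{hs}\psi)$; you sweep these into ``terms generated by the $s$-dependence of $\Pi_{hs}$, all $O(h^2\e^{-3/4})$'' without argument, and the first of them cannot be dispatched at all without a bound on $\pa_s\zeta$, which you do not have.

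The paper avoids both problems by a different implementation of the same idea. The dual problem \eqref{eq_dual} is posed \emph{discretely} in $\big(\bar V_{Nt}\big)_{t\in[0,T]}$, so $\Phi^\e_N(s)$ is itself an admissible test function and no interpolation of the dual solution is needed; and two integrations by parts in time (\eqref{eq_IBP_1}, \eqref{eq_essential}) arrange that only $\Theta^\e_N=u^\e-\hat u^\e_N$ --- never $\pa_s\Theta^\e_N$ --- appears, paired against $\Phi^\e_N$ and $\pa_s\Phi^\e_N$. All time derivatives thus land on the dual solution, where they are controlled by the discrete parabolic smoothing estimates \eqref{eq_bdd_1}--\eqref{eq_bdd_5}; in particular the logarithm comes from $\int_0^t|\tilde w'|\,d\tau\leq\kappa|w(t)|(\e+\log\tfrac{T}{h^2})$, obtained by splitting at $\tau=h^2$ and using the inverse inequality $|A_hw|\leq\kappa h^{-2}|w|$ on $(0,h^2)$ --- close in spirit to, but not the same as, your split at $t-s\sim h^2/\e$ based on the $H^2$-smoothing of the continuous dual. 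If you want to keep your continuous-dual version, you would need to supply the missing $\pa_t\rho$ estimate (or switch to a Ritz-projection decomposition to eliminate the $\e(\nabla\rho,\nabla v)$ term) and treat the $(\pa_s\zeta,\psi-\Pi_{hs}\psi)$ term honestly; as it stands the proposal is incomplete.
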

\begin{proof}
For the proof, we apply the Aubin-Nitsche trick as in \cite{AJ67}, \cite{CTW00}, \cite{CJ09} and \cite{NJ70}.
We first set
\be
\hat u^\e_N = \sum^N_{i=1} c_i \varphi_i + \sum^M_{j=1} d_j \varphi_0 \psi_j.
\ee
Then, by Lemma \ref{interpol_lem2}, we deduce that
\be
\begin{split}
	    \| u^\e - \bar u^\e_N \|_{L^{\infty}(0,T;L^2(D))} 
	    & \leq \| u^\e - \hat u^\e_N \|_{L^{\infty}(0,T;L^2(D))} + \| \hat u^\e_N - \bar u^\e_N \|_{L^{\infty}(0,T;L^2(D))} \\
	    & \leq \kappa \e^{-\f{3}{4}} h^2 + \| \hat u^\e_N - \bar u^\e_N \|_{L^{\infty}(0,T;L^2(D))}.
\end{split}
\label{eq_main_est1}
\ee
We now consider a duality argument to estimate $e^\e_N(t) := \hat u^\e_N - \bar u^\e_N$ in ${L^{\infty}(0,T;L^2(D))}$. For $t \in (0,T)$, let $\Phi^\e_N: (0,t) \longrightarrow \big ( \bar V_{Nt} \big )_{t \in [0,T]}$ satisfy
\be
\begin{cases}
	  -(\pa_s \Phi^\e_N(s),v) + a_\e(\Phi^\e_N(s),v) = 0, \quad 0<s<t, \quad v \in  \big ( \bar V_{Nt} \big )_{t \in [0,T]},\\
	  \Phi^\e_N(t) = e^\e_N(t),	  
\end{cases}
\label{eq_dual}
\ee
where $a_\e(u,v) = \e ( \nabla u, \nabla v)$. Taking $v = e^\e_N(s)$ in $\eqref{eq_dual}_1$, we find
\be
\begin{split}
	  \|e^\e_N(t)\|^2_{L^2(D)} 
	  & =  \int^t_0 \Big \{  -(\pa_s \Phi^\e_N(s),e^\e_N(s)) + a_\e (\Phi^\e_N(s),e^\e_N(s))  \Big \} ds + (\Phi^\e_N(t),e^\e_N(t))\\
	  & = \text{ (by integration by parts)}\\
	  & = \int^t_0 \Big \{  (\pa_s e^\e_N(s), \Phi^\e_N(s)) + a_\e (e^\e_N(s),\Phi^\e_N(s))  \Big \} ds + (\Phi^\e_N(0),e^\e_N(0)).
\end{split}
\label{eq_IBP_1}
\ee
From \eqref{eq_main_var_form} and \eqref{eq_var_form2}, we find
\be
\begin{cases}
	  (\pa_t (u^\e - \bar u^\e_N),v) + a_\e(u^\e - \bar u^\e_N,v)=0,\\
	  (u^\e(0) - \bar u^\e_N(0),v)=0.
\end{cases}
\label{eq_Gal_ortho}
\ee
Let us set $\Theta^\e_N(s) = u^\e(s) - \hat u^\e_N(s)$, then from \eqref{eq_IBP_1} and \eqref{eq_Gal_ortho} we obtain
\be
\begin{split}
	  \|e^\e_N(t)\|^2_{L^2(D)} 
	  & = \int^t_0 \Big \{ (\pa_s \Theta^\e_N,\Phi^\e_N) + a_\e (\Theta^\e_N,\Phi^\e_N) \Big \} ds + (\Theta^\e_N(0),\Phi^\e_N(0))\\
	  & = \text{ (by integration by parts)}\\
	  & = \int^t_0 \Big \{ - (\Theta^\e_N,\pa_s \Phi^\e_N) + a_\e (\Theta^\e_N,\Phi^\e_N) \Big \} ds + (\Theta^\e_N(t),\Phi^\e_N(t)).
\label{eq_essential}	  
\end{split}
\ee
To finish the proof, we look for estimates on the R.H.S. of \eqref{eq_essential}. Note that \eqref{eq_dual} is equivalent to the following ODE system
\be
\begin{cases}
	  - w' + \e A_h w = 0, \quad s \in (0,t)\\
	  w(t) = w_0,
\end{cases}
\label{eq_ODE_form}
\ee
where $w' = \f{d}{ds} w$ and $A_h$ is the discrete laplacian with respect to the spatial variables. We first take the scalar product of \eqref{eq_ODE_form} with $w$; we see that
\be
	  -\f{1}{2} \f{d}{ds} |w|^2 + \e |A_h^{\f{1}{2}}w|^2 =0.
\label{eq_ODE_form2}	  
\ee
Integrating \eqref{eq_ODE_form2} over $(s,t)$, we then find
\be
	  |w(s)| \leq |w(t)|, \quad \forall s \in (0,t);
\ee
hence we obtain
\be
	  \| \Phi^\e_N \|_{L^{\infty}(0,t;L^2(D))} \leq \kappa \| e^\e_N(t) \|_{L^2(D)}.
\label{eq_bdd_1}
\ee
Moreover, integrating \eqref{eq_ODE_form2} over $(0,t)$, we have
\be
	  \e \int^t_0 | A_h^{\f{1}{2}} w|^2 \leq |w(t)|^2.
\label{eq_bdd_1_2}	  
\ee
% then we also find
% \be
% 	  \| \Phi^\e_N \|_{L^{\infty}(0,t;L^2(D))} \leq  \f{\kappa}{\e} \| e^\e_N(t) \|_{L^2(D)}. 
% \ee
To find further estimates, it is convenient to consider the change of variable $\tau = t-s$. We then define $\tilde w(\tau) = w(t-s)$ and rewrite \eqref{eq_ODE_form} as
\be
\begin{cases}
	  \tilde w' + \e A_h \tilde w = 0,\\
	  \tilde w(\tau=0) = \tilde w_0 = w(t).
\label{eq_newODE_form}	  
\end{cases}
\ee
Multiplying \eqref{eq_newODE_form} by $\tau \tilde w'$, we obtain
\be
	  \tau | \tilde w' |^2 + \f{\e}{2}\f{d}{d \tau}(\tau |A_h^{\f{1}{2}} \tilde w |^2) - \f{\e}{2} |A_h^{\f{1}{2}} \tilde w|^2 = 0.
\label{eq_ODE2_est1}
\ee
Integrating \eqref{eq_ODE2_est1} over $(0,\tau)$ and using \eqref{eq_bdd_1_2}, we deduce
\be
	  \tau |A_h^{\f{1}{2}} \tilde w(\tau)|^2 \leq \int^{\tau}_0 |A_h^{\f{1}{2}} \tilde w|^2 \leq \int^t_0 |A_h^{\f{1}{2}} w|^2 \leq \f{\kappa}{\e} |w(t)|^2.
\label{eq_H1_L1_bdd}	  
\ee
Hence,
\be
	   |A_h^{\f{1}{2}}w(\tau)| \leq \f{\kappa}{\sqrt{ \e (t-s)}}  |w(t)|, \quad \forall \tau \in (0,t).
\label{eq_bdd_2}
\ee
Moreover, integrating \eqref{eq_ODE2_est1} over $(0,t)$ and using \eqref{eq_bdd_1_2}, we find
\be
	     \int^t_0 (\sqrt{\tau} |\tilde w'|)^2 ds + \f{\e t}{2} |A_h^{\f{1}{2}} \tilde w(t)|^2 \leq \f{\e}{2} \int^t_0 |A_h^{\f{1}{2}} \tilde w|^2 ds
\ee
implying that
\be
	      \|\sqrt{\tau} \tilde w'  \|^2_{L^2(0,t)} \leq \kappa |w(t)|^2.
\ee
Hence, we obtain
\be
	     \|\sqrt{\tau} \tilde w'  \|_{L^2(0,t)} \leq \kappa |w(t)|.
\label{eq_bdd_3}
\ee
We now consider the time derivative of \eqref{eq_newODE_form}
\be
	   \tilde w'' + \e A_h \tilde w' = 0,
\label{eq_newODE_deri}	  
\ee
and multiply \eqref{eq_newODE_deri} by $\tau^2 \tilde w'$
\be
	    \frac{1}{2} \f{d}{d \tau} |\tau \tilde w'|^2 - \tau |\tilde w'|^2 + \e|\tau A_h^{\f{1}{2}} \tilde w'|^2 = 0.
\label{eq_ODE3_est1}	    
\ee
Integrating \eqref{eq_ODE3_est1} over $(0,\tau)$ and using \eqref{eq_bdd_3}, we find
\be
	   \tau^2 |\tilde w'(\tau)|^2 \leq \kappa \int^t_0 \tilde \tau |\tilde w'|^2 d \tilde \tau \leq \kappa |w(t)|^2.
\ee
Hence,
\be
	  |\tilde w'(\tau)| \leq \kappa \tau^{-1} |w(t)|, \quad \forall \tau \in (0,t).
\label{eq_bdd_4}	  
\ee
Moreover, we also deduce
\be
\begin{split}
      \int^t_0 |\tilde w'| d\tau 
      & = \int^{h^2}_0 |\tilde w'| d \tau + \int^{t}_{h^2} |\tilde w'| d \tau \\
      & \leq \text{ (by \eqref{eq_newODE_form} and \eqref{eq_bdd_4})} \\
      & \leq \int^{h^2}_0 \e |A_h \tilde w| d \tau + \kappa \int^{t}_{h^2} \f{1}{\tau} |w(t)| d \tau \\
      & \leq \text{ (by the standard inverse Poincar\'e inequality  } |A_hw| \leq \kappa h^{-2}|w|)\\
      & \leq \int^{h^2}_0 \e h^{-2} | w(t)| d \tau + \kappa |w(t)| \log\f{T}{h^2}\\
      & \leq \kappa |w(t)| (\e + \log \f{T}{h^2}).
\end{split}      
\label{eq_bdd_5}
\ee
Back to \eqref{eq_essential}, we finally obtain
%{\color{red} TO DO: find appropriate equations}\\
%From \eqref{eq_bdd_1}, \eqref{eq_bdd_2}, \eqref{eq_bdd_3}, \eqref{eq_bdd_4} and \eqref{eq_bdd_5}, we find the estimate of \eqref{eq_essential}
\be
\begin{split}
	  \|e^\e_N(t)\|^2_{L^2(D)} 
	  & \leq \int^t_0 - (\Theta^\e_N,\pa_s \Phi^\e_N) + a_\e (\Theta^\e_N,\Phi^\e_N) ds + (\Theta^\e_N(t),\Phi^\e_N(t)) \\
	  & \leq \| \Theta^\e_N \|_{L^{\infty}(0,t;L^2(D))} \|\pa_s \Phi^\e_N\|_{L^1(0,t;L^2(D))} 
	      + \e \|\Theta^\e_N\|_{L^{\infty}(0,t;H^1(D))} \|\Phi^\e_N\|_{L^1(0,t;H^1(D))}\\
	  &    \quad \quad + \| \Theta^\e_N(t) \|_{L^2(D)} \| e^\e_N(t) \|_{L^2(D)}\\
	  & \leq \text{ (by \eqref{eq_H2_interpol}, \eqref{eq_bdd_1}, \eqref{eq_H1_L1_bdd}, \eqref{eq_bdd_2}, and \eqref{eq_bdd_5})}\\
	  & \leq \kappa h^2 \e^{-\f{3}{4}} \Big (\e + \log \f{T}{h^2} \Big ) \| e^\e_N(t) \|_{L^2(D)} + \kappa h \e^{-\f{1}{4}}   \| e^\e_N(t) \|_{L^2(D)}\\
	  & \quad \quad + \kappa h^2 \e^{-\f{3}{4}} \| e^\e_N(t) \|_{L^2(D)}.
	  % \leq (\kappa \e^{1+\nu} h^2 +  \kappa h^2 \e^{\nu} \log \f{T}{h^2} + \kappa h \e^{\nu}  + \kappa h^2 \e^{\nu})\| e^\e_N(t) \|_{L^2(D)}.
\end{split}
\label{eq_main_est2}
\ee
Hence, from \eqref{eq_main_est1} and \eqref{eq_main_est2}, we obtain
\be
 			 \| u^\e - \bar u^\e_N \|_{L^{\infty}(0,T;L^2(D))} \leq  \kappa h^2 \e^{-\f{3}{4}} \Big( 1+ \log 				\f{T}{h^2} \Big ) + \kappa h \e^{-\f{1}{4}}.
  %\kappa (h^2 \e^{\nu} \log \f{T}{h^2} +  \e^{1+\nu} h +  h^2 \e^{\nu})
\ee
This completes the proof of Theorem \ref{thm:main}.
\end{proof}
\begin{remark}
 We recall the corrector $\bar \theta^0$:
 $$
	  \bar \theta^0 
	  = - \delta(\xi) \int^t_0 I(\xi,t-s) \f{\partial u}{\partial t}(\eta,0,s) ds,
 $$
which cannot be expressed by the separation of variables in space and time.
Hence we could not fully take advantage of $\eqref{eq_thm1}_5$ when we find the upper bound in \eqref{eq_bdd_0}.
Hence the upper bound of $\| u^\e - u^\e_N \|$, the convergence of the standard FEM, is the same as in Theorem \ref{thm:main}.
However, in view of numerical simulations, we easily see that the new scheme with the boundary layer elements is much more accurate than the standard one; see e.g. Figures \ref{fig_1d}-\ref{fig_2d_log}.
\label{rem:3_2}
\end{remark}

\section{\bf Numerical simulations} \label{sec_num}
In this section, we present the results of numerical simulations of \eqref{eq_main} using the standard finite element method (SFEM) and the new finite element method (NFEM), which correspond to \eqref{eq_var_form1} and \eqref{eq_var_form2}, respectively. 
\subsection{Modified boundary layer element}
In the simulations, we do not use the boundary layer element $\varphi_0$ in \eqref{bl_element} directly since the term $I(\xi,t-s)$ is not convenient for the integrations over a triangle.
Instead, we consider the modified boundary layer element $\tilde \varphi_0 = \tilde \varphi_0(\xi,t)$:
\be
	  \tilde \varphi_0 (\xi,t) = \Big [ 1 - \exp \Big (-\f{\xi^2}{4 \e t} \Big) \Big ] \delta(\xi).
 \label{app_bl_element1}
\ee
The approximation $\tilde \varphi_0$ is much easier to implement numerically in coding than the corrector $\varphi_0$.
More precisely, the approximation $\tilde \varphi_0$ produces less errors in the numerical integrations than the boundary layer element $\varphi_0$ of \eqref{bl_element};
%Moreover, from \eqref{sol_cor}, \eqref{eq_est1} and Lemma \ref{lem_pwbd0}, we find that $\|\varphi_0 - \tilde \varphi_0\|_{L^{\infty}(0,T;L^2(D))} \leq \kappa \e^{\f{1}{4}}$. Hence, $\tilde \varphi_0$ is close to $\varphi_0$ if $\e$ is small enough; see e.g. Figures \ref{comp_cor1} and \ref{comp_cor2}.
One drawback of both boundary layer elements $\varphi_0$ and $\tilde \varphi_0$ is that we calculate the element matrices for each time step since the boundary layer element in \eqref{app_bl_element1} depends on time.
To improve computational efficiency, we introduce the time-independent boundary layer element $\tilde \varphi_{-1}$ such that
\be
	  \tilde \varphi_{-1} (\xi) = \Big [ 1 - \exp \Big (-\f{\xi^2}{4 \e} \Big) \Big ] \delta(\xi).
 \label{app_bl_element2}
\ee
As long as the final time is not very large, or $\e$ is small enough (for instance $T \e \sim 10^{-4}$), then the time-independent approximation of the boundary layer element is acceptable in the sense of numerical simulations as shown by the following results; see Figures \ref{comp_cor1} and \ref{comp_cor2}.
\begin{figure}[h]
    \centering
    \includegraphics[scale=0.47]{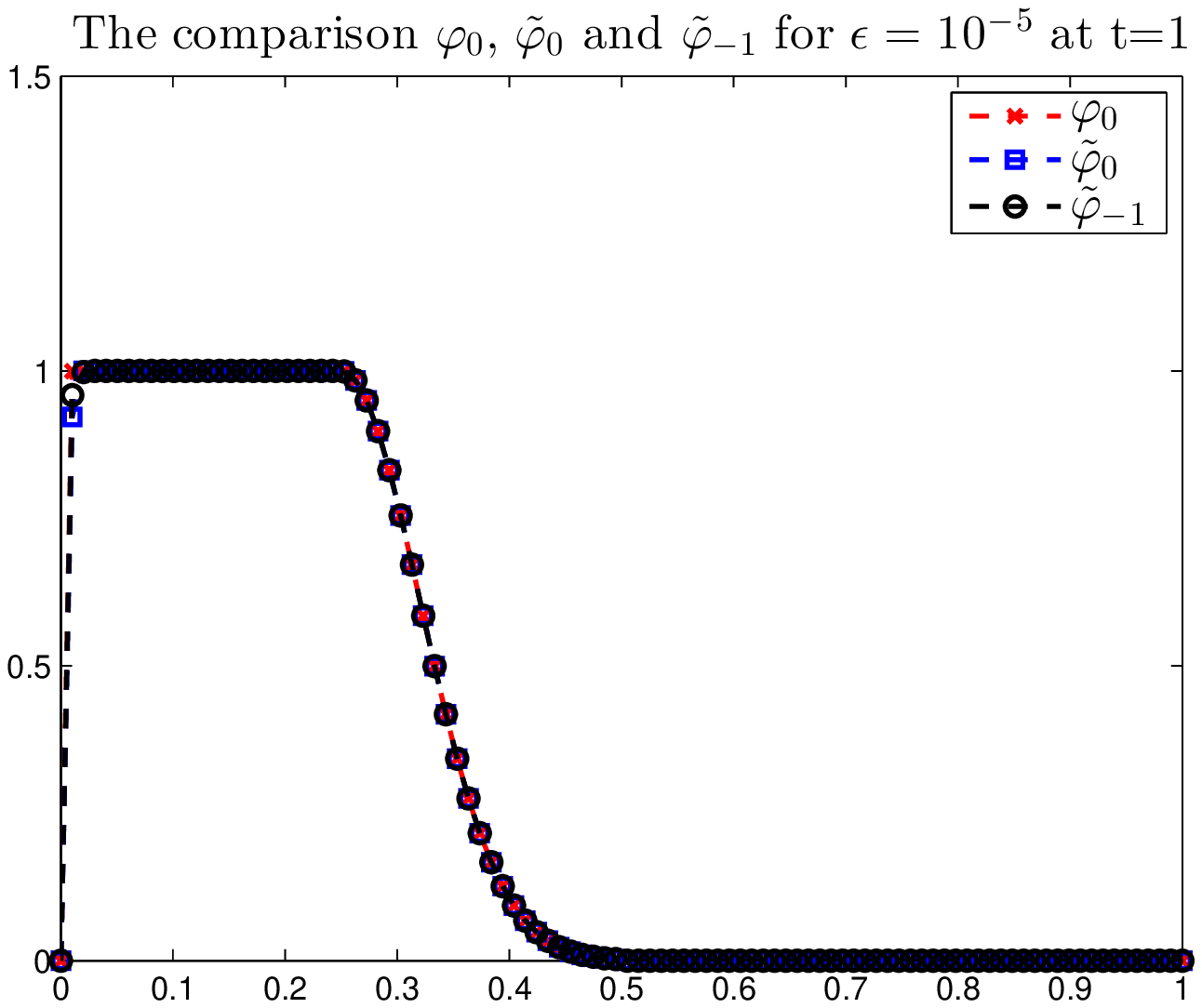}
    \includegraphics[scale=0.47]{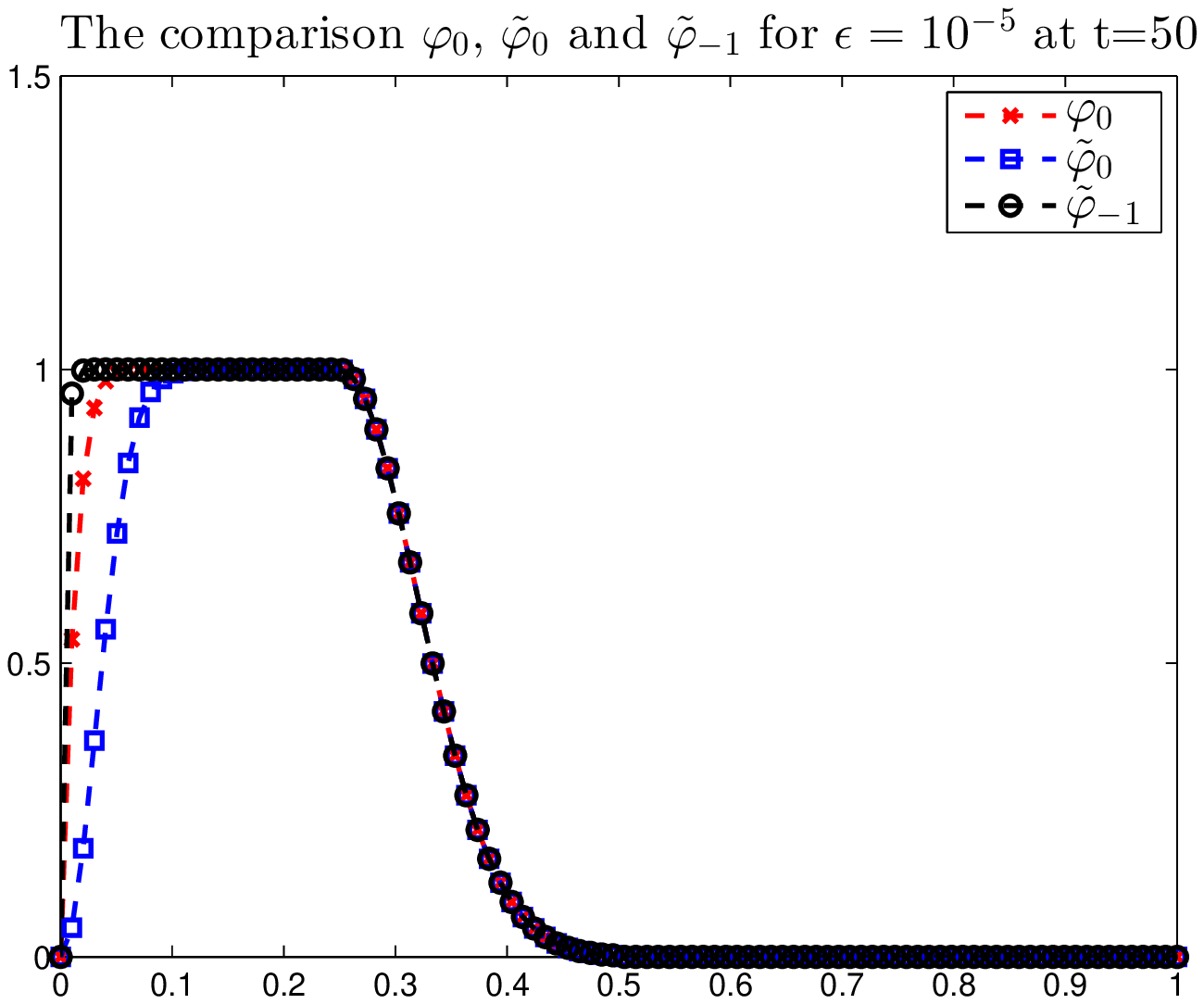}
    \caption{The comparison of $\varphi_0$, $\tilde \varphi_0$ and $\tilde \varphi_{-1}$ for $\e = 10^{-5}$ at $t=1$ and $t=50$.}
    \label{comp_cor1}
\end{figure}
\begin{figure}[h]
    \centering
    \includegraphics[scale=0.47]{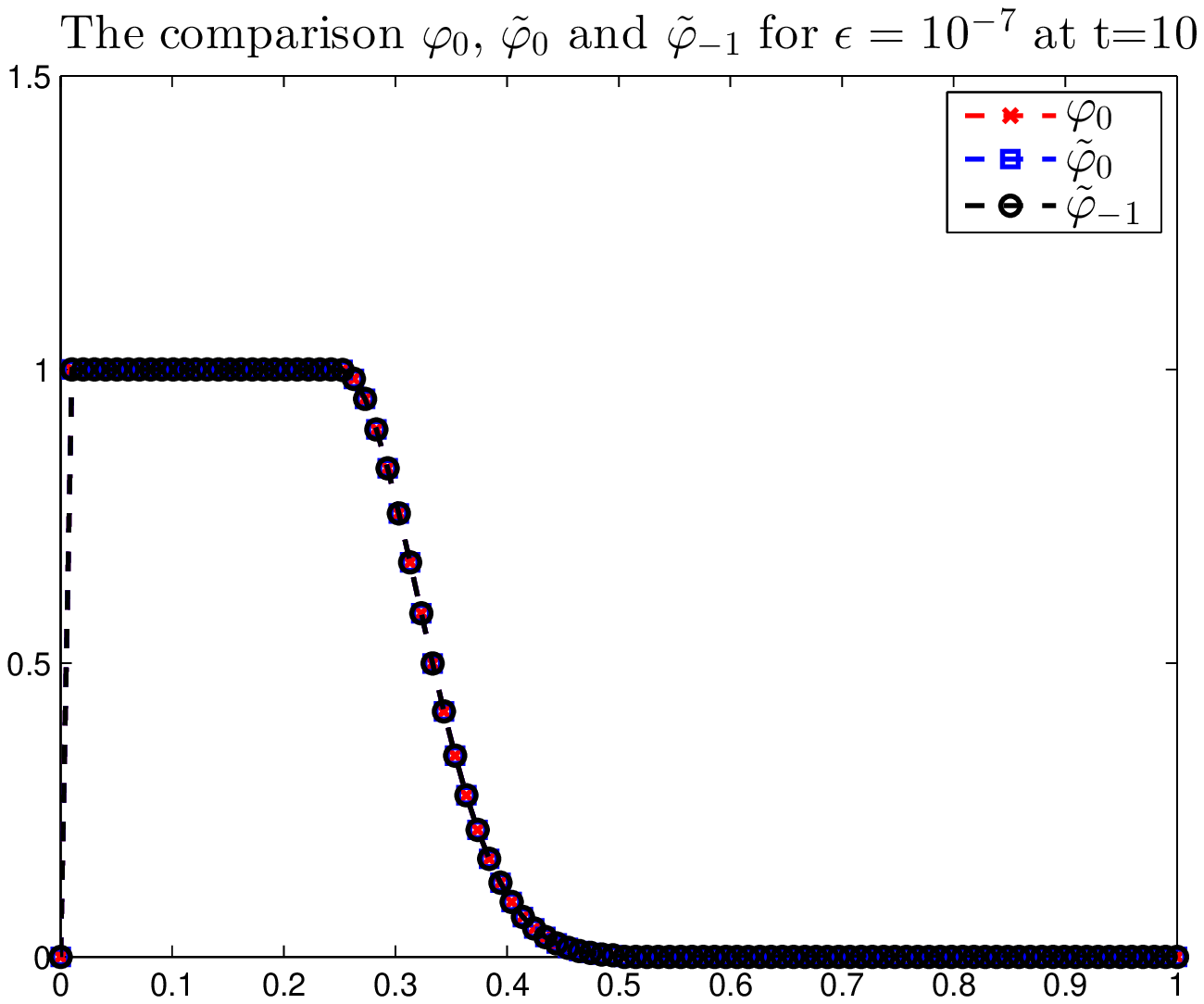}
    \includegraphics[scale=0.47]{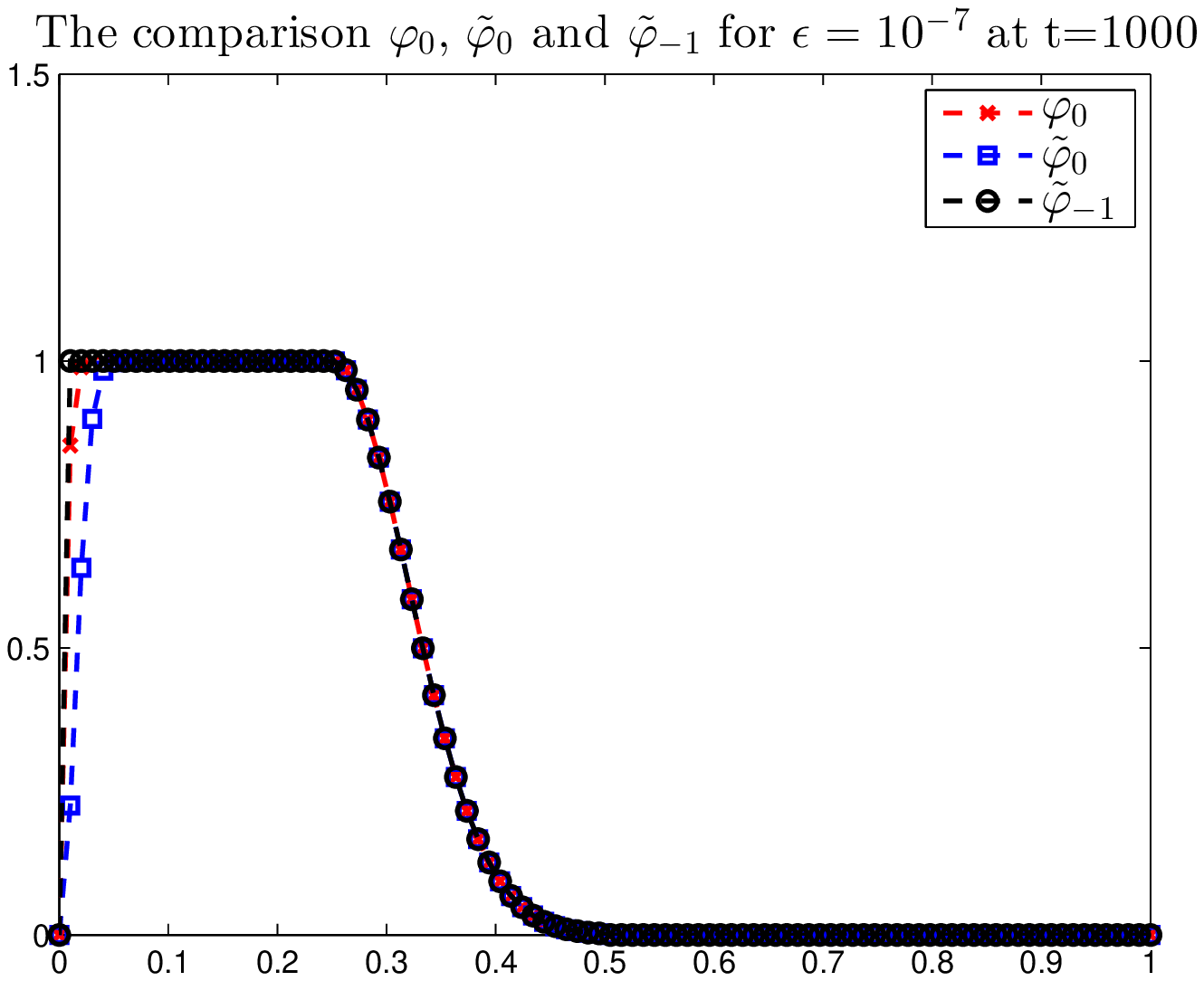}
    \caption{The comparison of $\varphi_0$, $\tilde \varphi_0$ and $\tilde \varphi_{-1}$ for $\e = 10^{-5}$ at $t=10$ and $t=1000$.}
    \label{comp_cor2}
\end{figure}
\begin{remark}
In addition, we use, in the numerical simulations, $\tilde \varphi_{-1}^{lin}$ which is the linearized form of the boundary layer element $\tilde \varphi_{-1}$ such that
\be
	 	 \tilde \varphi_{-1}^{lin} (\xi) = \Big [ 1 - \exp \Big (-\f{\xi^2}{4 \e} \Big)- \Big (1-\exp \Big 					(-\f{\sigma^2}{4 \e} \Big ) \Big) \f{\xi}{\sigma} \Big ] \chi_{[0,\sigma]}(\xi),
\label{eq_bl_element}
\ee
where $\sigma \geq 0$ is chosen in numerical examples below; see e.g. Figure \ref{comp_cor3}. 
Compared with \eqref{app_bl_element2}, we replace the cut-off function $\delta(\xi)$ in \eqref{app_bl_element2} with the linear term in \eqref{eq_bl_element}.
Introducing the linearized boundary layer element $\tilde \varphi_{-1}^{lin}$, the numerical integrations are simpler than with $\tilde \varphi_{-1}$. For more details, see \cite{HJL13} and \cite{HJT13}. 
\end{remark}
\begin{figure}[h]
    \centering
    \includegraphics[scale=0.47]{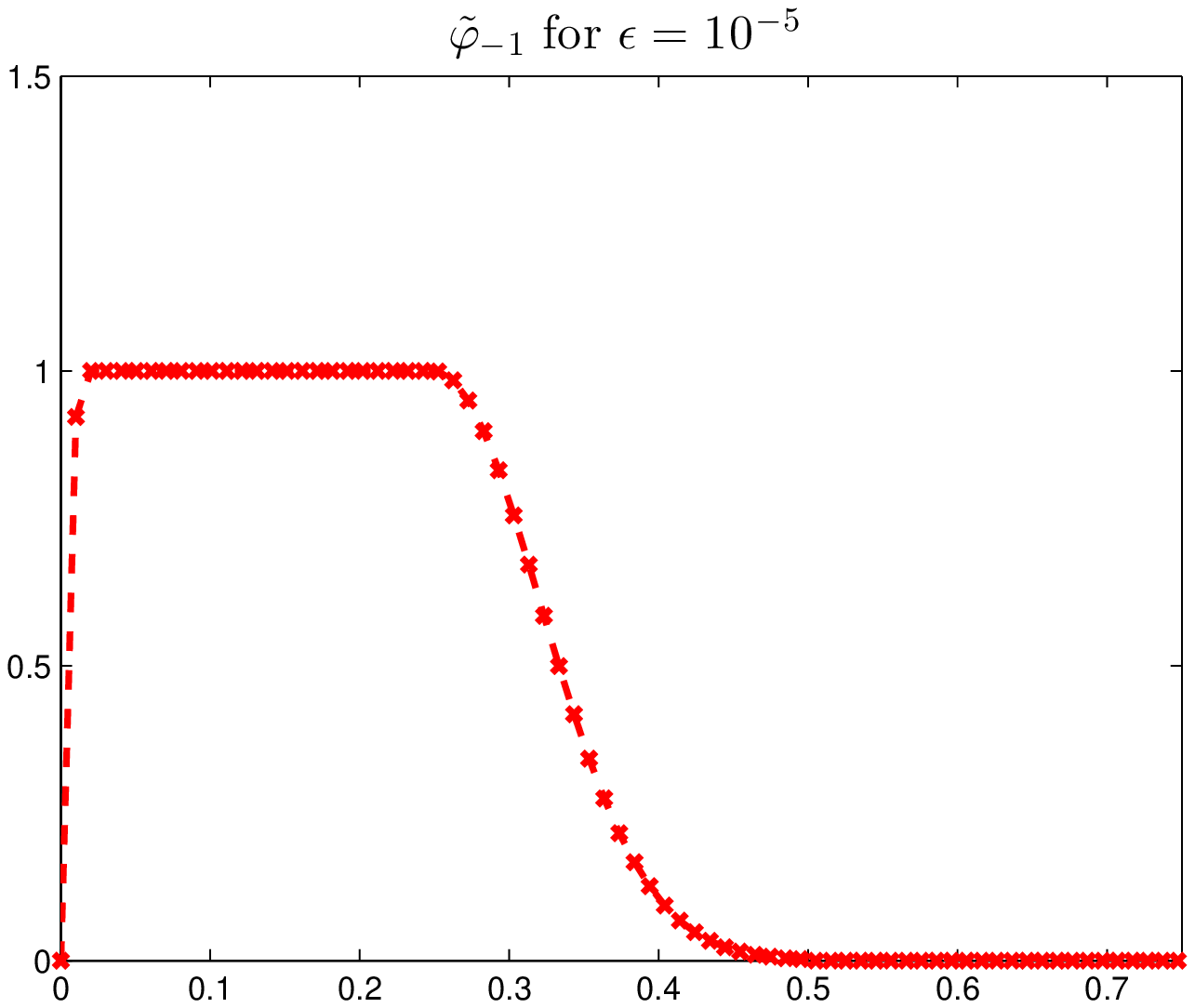}
    \includegraphics[scale=0.47]{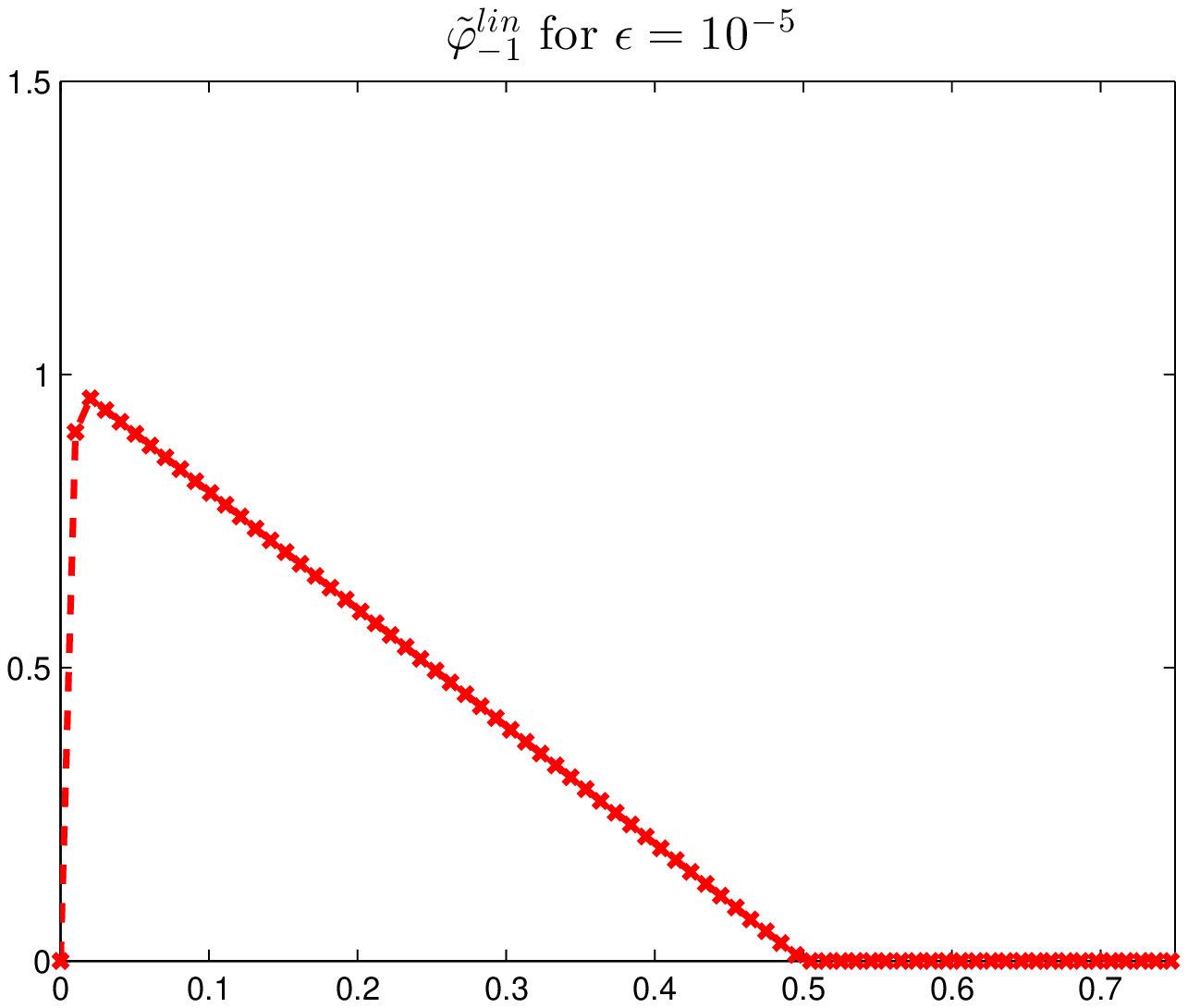}
    \caption{The left figure is the modified corrector $\tilde \varphi_{-1}$, and the right one is the linearized (modified) corrector $\tilde \varphi_{-1}^{lin}$ for $\e = 10^{-5}$.}
    \label{comp_cor3}
\end{figure}
We compute the solution of our problem using a quasi-uniform mesh in place of the adaptive mesh refinement near the boundary layer as in common in the literature \cite{CG12}, \cite{RST08}, \cite{SM05} and \cite{ST03}; see e.g. the triangulation in Figure \ref{fig_mesh}.
For the numerical integrations, we employ the Gauss-Legendre quadrature. Moreover, in the following numerical examples, we apply the implicit Euler method for the time discretizations.

\begin{figure}[h]
    \includegraphics[width=0.7\textwidth]{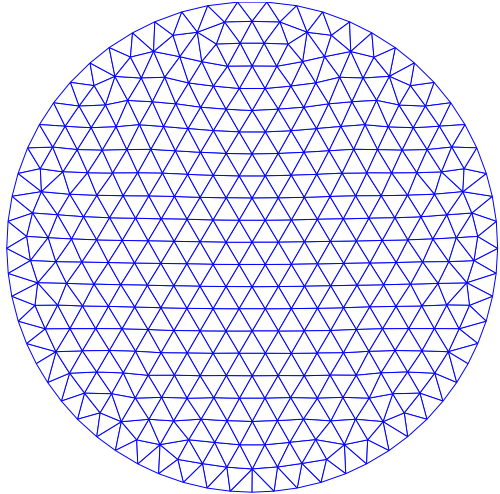}
  \caption{Example of a quasi-uniform grid in a circle.}
\label{fig_mesh}
\end{figure}

\subsection{Simulation 1: One-dimensional example}
We first present a simple one-dimensional example. Indeed, in the previous sections, we mainly focused on the two-dimensional problem, but we can simply reduce our problem to the one-dimensional case. To compare the numerical solutions with the exact solutions, we consider the following equations:
\be
\begin{cases}
	u^\e_t - \e u^\e_{xx} = f(x,t), \quad \text{in  } (0,1) \times (0,T),\\
	u^\e(0,t) =u^\e(1,t) = 0, \quad \text{  }  t \in (0,T),\\
	u^\e(x,0) = u_0(x), \quad \text{  } x \in (0,1). \\
\end{cases}
\label{eq_1d_eq}		
\ee
We choose the exact solution $u^\e(x,t)$ of \eqref{eq_1d_eq} as
\be
		u^\e = t  \Big (1-\exp \Big (-\f{x}{\sqrt{\e}} \Big ) \cos \Big (\f{x}{\sqrt{\e}} \Big ) \Big ) \Big (1-\cos \Big (\f{(1-x)}{\sqrt{\e}} \Big ) \exp \Big (-\f{(1-x)}{\sqrt{\e}} \Big ) \Big ).
\label{eq_1d_exac}
\ee
Hence, $f$ is computed from $\eqref{eq_1d_eq}_1$. In Figure \ref{fig_1d}, we observe that the SFEM method (the solid line) produce oscillations near the boundary, however, with the NFEM method (the dotted line), the boundary layer elements capture the sharp transition near the boundary.\\
Figure \ref{fig_1d_log} shows the rate of convergence of the relative $L^2$ errors for the SFEM method and the NFEM method in log-log scales. We define the relative $L^2$ error
\be
\label{e4.3}
	\f{\|u_{\text{EX}} - u_{\text{N}}\|_{L^2}}{\|u_{\text{EX}}\|_{L^2}},
\ee
where $u_{\text{EX}}$ is the exact solution as in \eqref{eq_1d_exac} and $u_{\text{N}}$ is the numerical solution. According to Figures \ref{fig_1d} and \ref{fig_1d_log}, we observe that the errors from the NFEM method is much smaller than that from the SFEM method.
\begin{figure}[h]
    \includegraphics[width=0.7\textwidth]{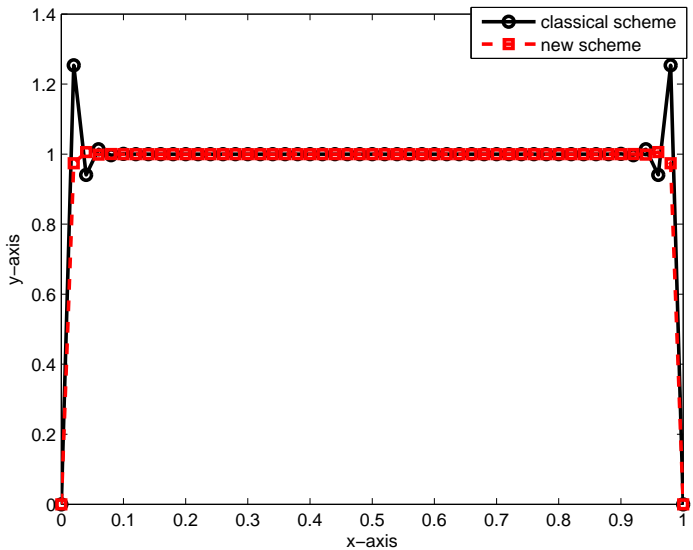} 
  \caption{Solution at $T=1$ of \eqref{eq_1d_eq} where $u^\e$ is as in \eqref{eq_1d_exac} and $\e = 10^{-5}$; the solid line: the classical scheme $u^\e_N$, the dotted line: the new scheme $\bar u^\e_N$. The number of elements is $N=50$ and the size of the time step is $\Delta t = 0.01$.}
\label{fig_1d}
\end{figure}

\begin{figure}[h]
    \includegraphics[width=0.7\textwidth]{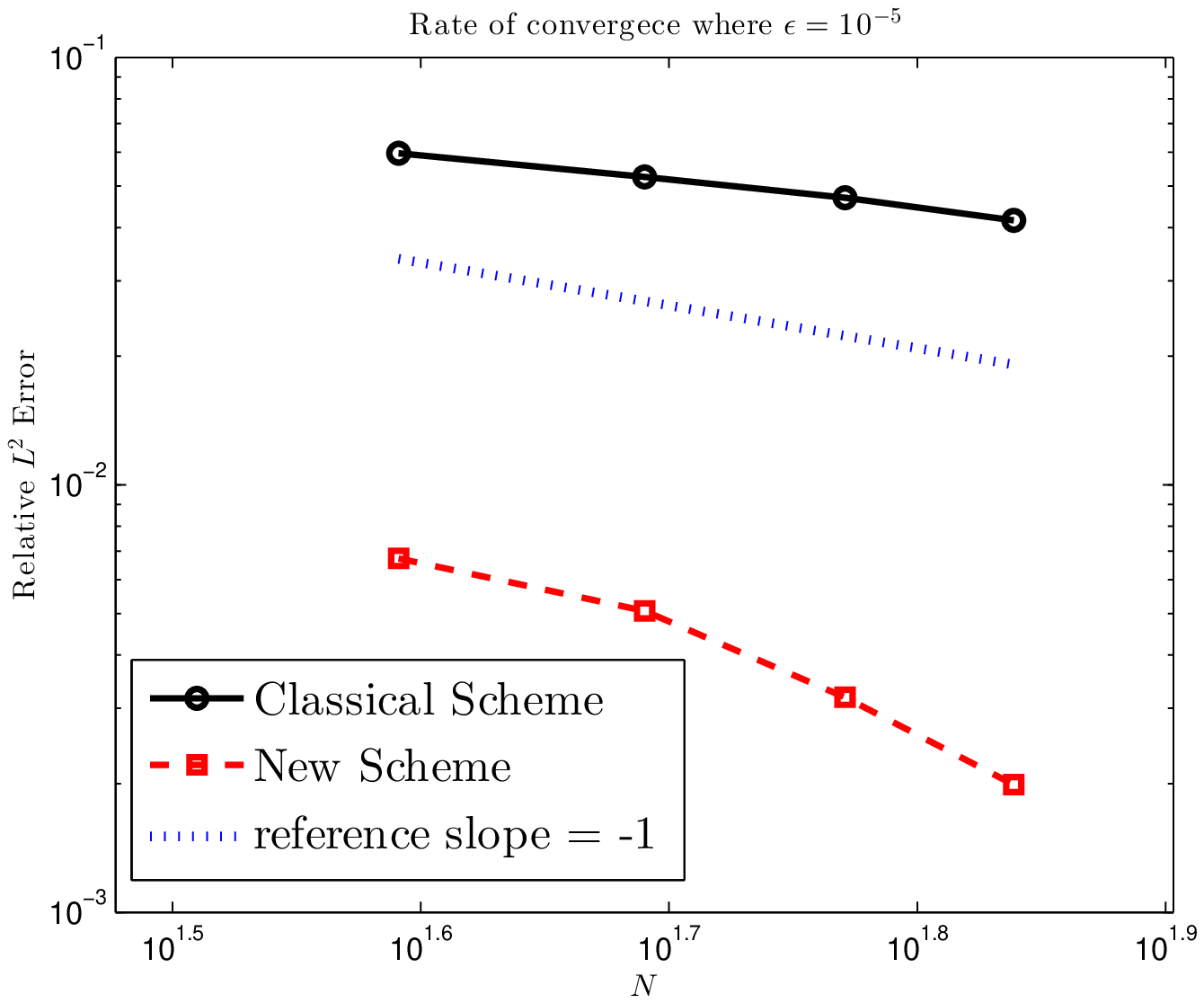}
  \caption{The convergence of the relative $L^2$-errors at time $T=1$ for the solution of \eqref{eq_1d_exac} for two different schemes (the SFEM method and the NFEM method) in log-log scales where $\e=10^{-5}$ and the size of the time step $\Delta t = 0.01$.}
\label{fig_1d_log}
\end{figure}

\subsection{Simulation 2: Two-dimensional example}
For the two-dimensional example, we approximate the following exact solution $u^\e$ of \eqref{eq_main}
\be
	  u^\e = \exp(t) \bigg (1 - \f{I_0  ( \f{r}{\sqrt{\e}}  )}{I_0(\f{1}{\sqrt{\e}})} \bigg ),
\label{eq_2d_exac}	  
\ee
where $I_0(x)$ is the modified Bessel function of the first kind; see e.g. \cite{HJL13}. Then, we can find the corresponding $f=\exp(t)$.
Figure \ref{fig_2d} shows the approximate solutions for different schemes. We find that the numerical solution with the NFEM method (B) is smooth. On the other hand, the solution with the SFEM method (A) displays wild oscillations near the boundary.\\
In Figure \ref{fig_2d_log}, we observe the rate of convergence of the relative $L^2$ errors for the SFEM method and the NFEM method in log-log scales. As we expect, the NFEM method is more accurate than the SFEM method.
\begin{figure}
\begin{center}
$
\begin{array}{lcr}
                (A) \includegraphics[scale = 0.49]{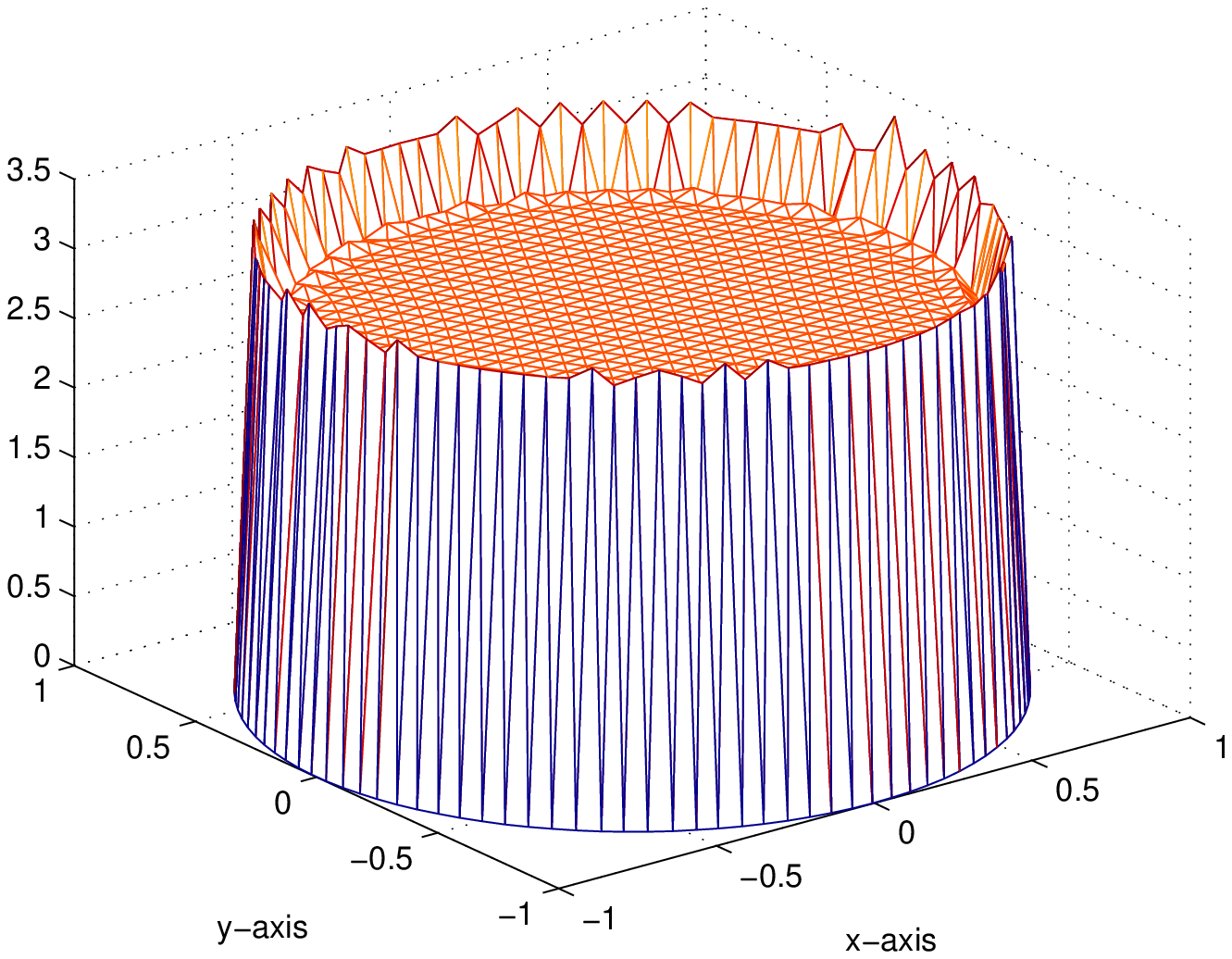}&
                (B) \includegraphics[scale = 0.49]{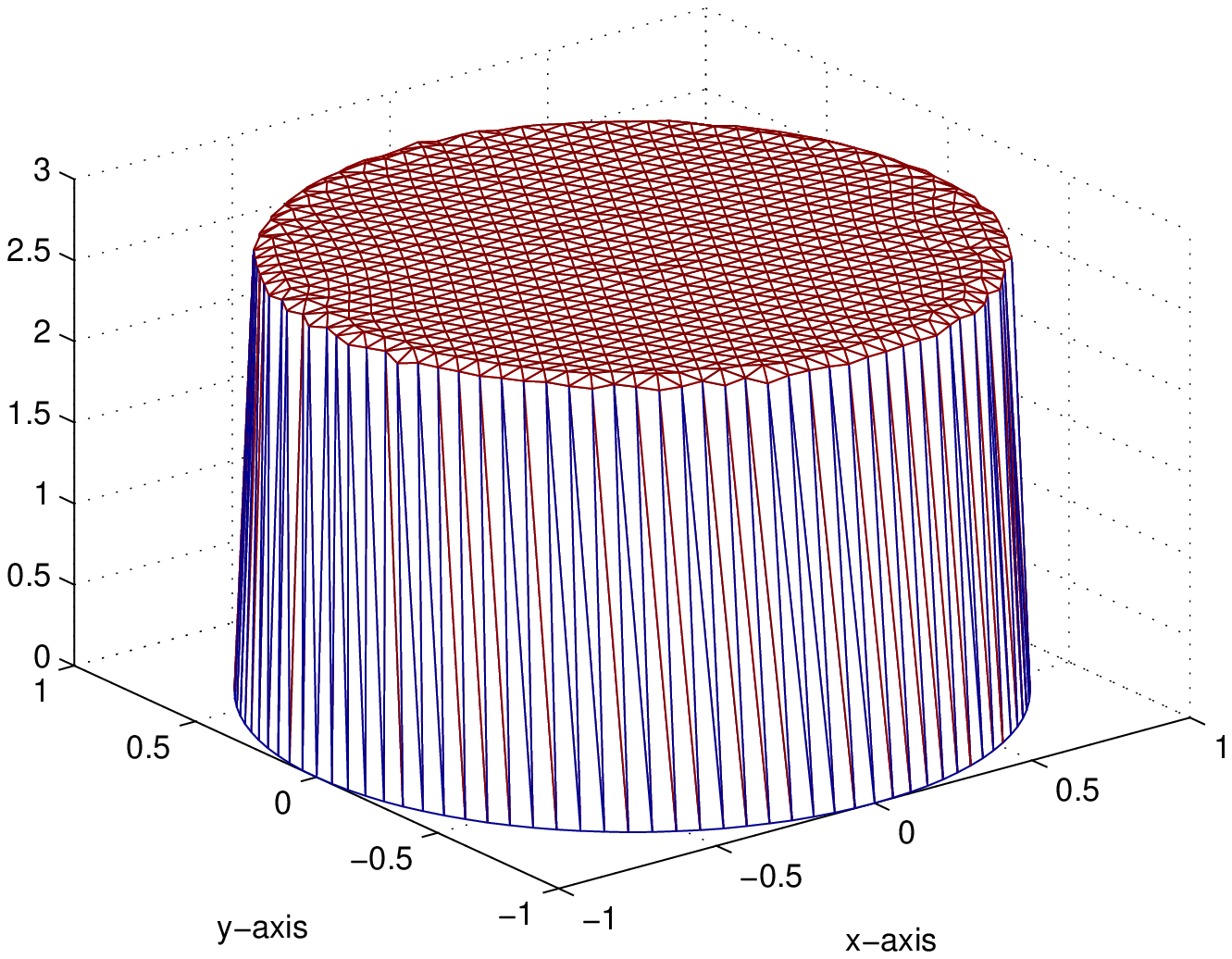}
\end{array}
$
\end{center}
\caption{Solution at $T=1$ of (\ref{eq_main}) where $u^\e$ is as in \eqref{eq_2d_exac} and $\e=10^{-8}$. (A): solution $u_N^{\e}$ with the SFEM method and (B): solution $\bar u_N^\e$ with the NFEM method. The number of elements is $N=1,008$, and the number of boundary nodes is $M=52$. The size of the time step is $\Delta t = 0.01$.}
\label{fig_2d}
\end{figure}

\begin{figure}[ht]
    \includegraphics[width=0.7\textwidth]{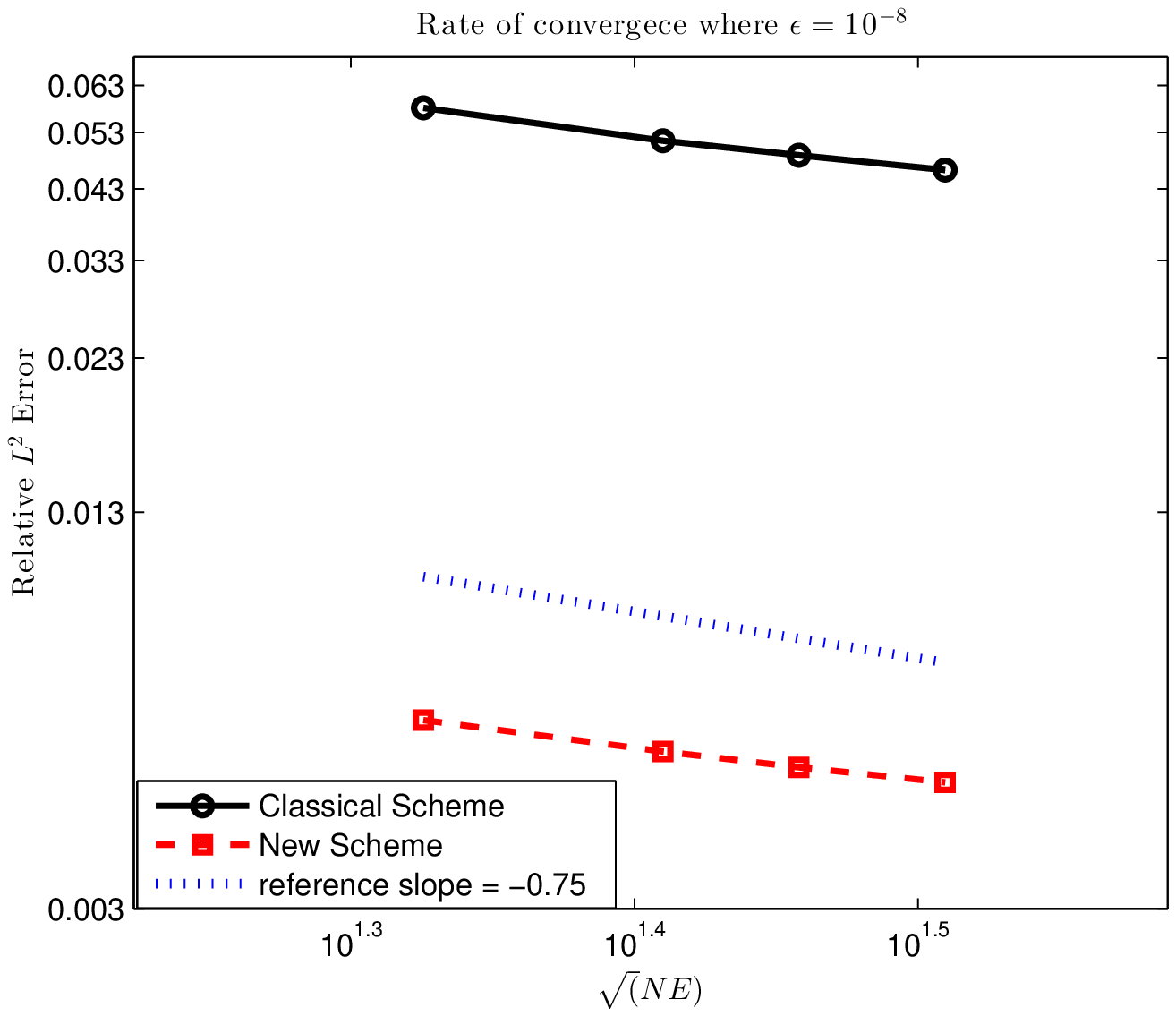}
  \caption{The convergence of the relative $L^2$-errors at time $T=1$ for the solution of \eqref{eq_2d_exac} for two different schemes (the SFEM method and the NFEM method) in log-log scales where $\e=10^{-8}$. The size of the time step is $\Delta t = 0.01$.}
\label{fig_2d_log}
\end{figure}

\section{Conclusion}
The numerical evidence in Section \ref{sec_num} shows that we found an accurate approximate solution for the heat equation with small thermal conductivity. One of the novelties of this article is to compute a non oscillatory numerical solution using a {\it quasi-uniform mesh} in a curved domain.

In the future we intend to study singularly perturbed convection-diffusion equations introducing the convective terms. However, there are two new major difficulties. Firstly, due to the convective terms, the numerical errors propagate into the interior of the domain. Secondly, one should take into account the compatibility conditions near the characteristic points; for more details, see e.g. \cite{JT14} and a forthcoming article \cite{HT14}. One can also extend our results to the Stokes equations (linearized Navier-Stokes equations) as in \cite{GHT11}. However, one should propose a new numerical approach to treat the pressure and the divergence free condition (such as the projection method in \cite{Tem_proj}, \cite{Tem_NSE}, and \cite{CA68}), keeping in mind that the pressure is a global (nonlocal) function of the velocity.

\section*{Acknowledgements}
%The author would like to thank Professor Roger Temam and Gung-Min Gie for the fruitful discussions and his helpful advice.
This work was supported in part by NSF Grants DMS 1206438 and by the Research Fund of Indiana University.


\begin{thebibliography}{10}

\bibitem{AJ67}
Jean-Pierre Aubin.
\newblock Behavior of the error of the approximate solutions of boundary value
  problems for linear elliptic operators by {G}elerkin's and finite difference
  methods.
\newblock {\em Ann. Scuola Norm. Sup. Pisa (3)}, 21:599--637, 1967.

\bibitem{BS94}
Susanne~C. Brenner and L.~Ridgway Scott.
\newblock {\em The mathematical theory of finite element methods}, volume~15 of
  {\em Texts in Applied Mathematics}.
\newblock Springer-Verlag, New York, 1994.

\bibitem{BCGJ07}
B.~Bujanda, C.~Clavero, J.~L. Gracia, and J.~C. Jorge.
\newblock A high order uniformly convergent alternating direction scheme for
  time dependent reaction-diffusion singularly perturbed problems.
\newblock {\em Numer. Math.}, 107(1):1--25, 2007.

\bibitem{Cannon}
John~Rozier Cannon.
\newblock {\em The one-dimensional heat equation}, volume~23 of {\em
  Encyclopedia of Mathematics and its Applications}.
\newblock Addison-Wesley Publishing Company Advanced Book Program, Reading, MA,
  1984.
\newblock With a foreword by Felix E. Browder.

\bibitem{CT02}
Wenfang Cheng and Roger Temam.
\newblock Numerical approximation of one-dimensional stationary diffusion
  equations with boundary layers.
\newblock {\em Comput. \& Fluids}, 31(4-7):453--466, 2002.
\newblock Dedicated to Professor Roger Peyret on the occasion of his 65th
  birthday (Marseille, 1999).

\bibitem{CTW00}
Wenfang Cheng, Roger Temam, and Xiaoming Wang.
\newblock New approximation algorithms for a class of partial differential
  equations displaying boundary layer behavior.
\newblock {\em Methods Appl. Anal.}, 7(2):363--390, 2000.
\newblock Cathleen Morawetz: a great mathematician.

\bibitem{CA68}
Alexandre~Joel Chorin.
\newblock Numerical solution of the {N}avier-{S}tokes equations.
\newblock {\em Math. Comp.}, 22:745--762, 1968.

\bibitem{CP02}
Philippe~G. Ciarlet.
\newblock {\em The finite element method for elliptic problems}, volume~40 of
  {\em Classics in Applied Mathematics}.
\newblock Society for Industrial and Applied Mathematics (SIAM), Philadelphia,
  PA, 2002.
\newblock Reprint of the 1978 original [North-Holland, Amsterdam].

\bibitem{CG12}
C.~Clavero and J.~L. Gracia.
\newblock A high order {HODIE} finite difference scheme for 1{D} parabolic
  singularly perturbed reaction-diffusion problems.
\newblock {\em Appl. Math. Comput.}, 218(9):5067--5080, 2012.

\bibitem{GHT11}
Gung-Min Gie, Makram Hamouda, and Roger Temam.
\newblock Asymptotic analysis of the {S}tokes problem on general bounded
  domains: the case of a characteristic boundary.
\newblock {\em Appl. Anal.}, 89(1):49--66, 2010.

\bibitem{GHT10}
Gung-Min Gie, Makram Hamouda, and Roger Temam.
\newblock Boundary layers in smooth curvilinear domains: parabolic problems.
\newblock {\em Discrete Contin. Dyn. Syst.}, 26(4):1213--1240, 2010.

\bibitem{HJL13}
Youngjoon Hong, Chang-Yeol Jung, and Jacques Laminie.
\newblock Singularly perturbed reaction-diffusion equations in a circle with
  numerical applications.
\newblock {\em International Journal of Computer Mathematics}, to appear.

\bibitem{HT14}
Youngjoon Hong, Chang-Yeol Jung, and Roger Temam.
\newblock Singular perturbation analysis of the time dependent convection-diffusion equations.
\newblock in preparation.

\bibitem{HJT13}
Youngjoon Hong, Chang-Yeol Jung, and Roger Temam.
\newblock On the numerical approximations of stiff convection-diffusion
  equations in a circle.
\newblock {\em Numerische Mathematik}, to appear.

\bibitem{CJ09}
Claes Johnson.
\newblock {\em Numerical solution of partial differential equations by the
  finite element method}.
\newblock Dover Publications Inc., Mineola, NY, 2009.
\newblock Reprint of the 1987 edition.

\bibitem{JT08}
Chang-Yeol Jung.
\newblock Finite elements scheme in enriched subspaces for singularly perturbed
  reaction-diffusion problems on a square domain.
\newblock {\em Asymptot. Anal.}, 57(1-2):41--69, 2008.

\bibitem{JPT11}
Chang-Yeol Jung, Madalina Petcu, and Roger Temam.
\newblock Singular perturbation analysis on a homogeneous ocean circulation
  model.
\newblock {\em Anal. Appl. (Singap.)}, 9(3):275--313, 2011.

\bibitem{JT06}
Chang-Yeol Jung and Roger Temam.
\newblock On parabolic boundary layers for convection-diffusion equations in a
  channel: analysis and numerical applications.
\newblock {\em J. Sci. Comput.}, 28(2-3):361--410, 2006.

\bibitem{JT10}
Chang-Yeol Jung and Roger Temam.
\newblock Finite volume approximation of two-dimensional stiff problems.
\newblock {\em Int. J. Numer. Anal. Model.}, 7(3):462--476, 2010.

\bibitem{JT14}
Chang-Yeol Jung and Roger Temam.
\newblock Boundary layer theory for convection-diffusion equations in a circle.
\newblock {\em Russian Mathematical Survey}, to appear.
\newblock special volume in memory of Mark Vishik.

\bibitem{KS11}
Natalia Kopteva and Simona~Blanca Savescu.
\newblock Pointwise error estimates for a singularly perturbed time-dependent
  semilinear reaction-diffusion problem.
\newblock {\em IMA J. Numer. Anal.}, 31(2):616--639, 2011.

\bibitem{LM10}
Torsten Linss and Niall Madden.
\newblock Analysis of an alternating direction method applied to singularly
  perturbed reaction-diffusion problems.
\newblock {\em Int. J. Numer. Anal. Model.}, 7(3):507--519, 2010.

\bibitem{NJ70}
J.~Nitsche.
\newblock Lineare {S}pline-{F}unktionen und die {M}ethoden von {R}itz f\"ur
  elliptische {R}andwertprobleme.
\newblock {\em Arch. Rational Mech. Anal.}, 36:348--355, 1970.

\bibitem{RST08}
Hans-G{\"o}rg Roos, Martin Stynes, and Lutz Tobiska.
\newblock {\em Robust numerical methods for singularly perturbed differential
  equations}, volume~24 of {\em Springer Series in Computational Mathematics}.
\newblock Springer-Verlag, Berlin, second edition, 2008.
\newblock Convection-diffusion-reaction and flow problems.

\bibitem{RU03}
Hans-G{\"o}rg Roos and Zorica Uzelac.
\newblock The {SDFEM} for a convection-diffusion problem with two small
  parameters.
\newblock {\em Comput. Methods Appl. Math.}, 3(3):443--458 (electronic), 2003.
\newblock Dedicated to John J. H. Miller on the occasion of his 65th birthday.

\bibitem{KS87}
Shagi-Di Shih and R.~Bruce Kellogg.
\newblock Asymptotic analysis of a singular perturbation problem.
\newblock {\em SIAM J. Math. Anal.}, 18(5):1467--1511, 1987.

\bibitem{SF73}
Gilbert Strang and George~J. Fix.
\newblock {\em An analysis of the finite element method}.
\newblock Prentice-Hall Inc., Englewood Cliffs, N. J., 1973.
\newblock Prentice-Hall Series in Automatic Computation.

\bibitem{SM05}
Martin Stynes.
\newblock Steady-state convection-diffusion problems.
\newblock {\em Acta Numer.}, 14:445--508, 2005.

\bibitem{ST03}
Martin Stynes and Lutz Tobiska.
\newblock The {SDFEM} for a convection-diffusion problem with a boundary layer:
  optimal error analysis and enhancement of accuracy.
\newblock {\em SIAM J. Numer. Anal.}, 41(5):1620--1642 (electronic), 2003.


\bibitem{Tem_proj}
Roger Temam.
\newblock Sur l'approximation de la solution des \'{e}quations de Navier-Stokes par la m\'{e}thode des pas fractionnaires II.
\newblock {\em Arch. Rational Mech. Anal.}, 33:377-385, 1969.


\bibitem{Tem_NSE}
Roger Temam.
\newblock {\em Navier-{S}tokes equations}.
\newblock AMS Chelsea Publishing, Providence, RI, 2001.
\newblock Theory and numerical analysis, Reprint of the 1984 edition.

\bibitem{TW95}
Roger Temam and Xiao~Ming Wang.
\newblock Asymptotic analysis of the linearized {N}avier-{S}tokes equations in
  a channel.
\newblock {\em Differential Integral Equations}, 8(7):1591--1618, 1995.

\bibitem{TW97}
Roger Temam and Xiaoming Wang.
\newblock Asymptotic analysis of the linearized {N}avier-{S}tokes equations in
  a general {$2$}{D} domain.
\newblock {\em Asymptot. Anal.}, 14(4):293--321, 1997.

\bibitem{VS13}
Martin Viscor and Martin Stynes.
\newblock A robust finite difference method for a singularly perturbed
  degenerate parabolic problem {II}.
\newblock {\em IMA J. Numer. Anal.}, 33(2):460--480, 2013.

\end{thebibliography}
\end{document}